\numberwithin{equation}{section} \textwidth 160mm \oddsidemargin
\def\p{\partial}
\def\b{\bar}
\def\tr{\rm tr}
\newtheorem{prop}{Proposition}[section]
\newtheorem{theo}[prop]{Theorem}
\newtheorem{lem}[prop]{Lemma}
\newtheorem{rem}[prop]{Remark}
\newtheorem{defi}[prop]{Definition}
\def\begeq{\begin{equation}}
\def\endeq{\end{equation}}
\def\and{\quad{\rm and}\quad}
\let\lra=\longrightarrow
\def\mapright\#1{\,\smash{\mathop{\lra}\limits^{\#1}}\,}
\title{ K\"ahler-Einstein metrics on Fano manifolds, I: approximation of metrics with cone singularities}
\begin{document}
\bibliographystyle{plain}
\date{}
\author{Xiu-Xiong Chen, Simon Donaldson and Song Sun}
\date{\today}
\maketitle

\section{Introduction}

This is the first of a series of three papers which provide proofs of results
announced in \cite{CDS}.
Let $X$ be a Fano manifold of complex dimension $n$. Let $\lambda>0$ be an integer and $D$ be a smooth divisor in the linear system $\vert -\lambda K_{X}\vert$. For $\beta\in (0,1]$ there is now a well-established notion of a K\"ahler-Einstein metric with a cone singularity of cone angle $2\pi \beta$ along $D$. (It is often called an \lq\lq edge-cone" singularity).  For brevity we will just say that $\omega$ has cone angle $2\pi \beta$ along $D$.   The Ricci curvature of such a metric $\omega$ is $(1-\lambda(1-\beta))\omega$. 
Our primary concern is the case of positive Ricci curvature, so we suppose throughout most of the paper that $\beta\geq \beta_{0}> 1-\lambda^{-1}$. However our arguments also apply to the case of non-positive Ricci curvature: see Remark \ref{remark neg}.

\begin{theo}  
If $\omega$ is a K\"ahler-Einstein metric with cone angle $2\pi \beta$ along $D$ with $\beta\geq \beta_0$,  then $(X,\omega)$ is the Gromov-Hausdorff limit of a sequence of smooth K\"ahler metrics with positive Ricci curvature and with diameter bounded by a fixed number depending only on $\beta_{0},\lambda$. \label{thm1}
\end{theo}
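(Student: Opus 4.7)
The plan is to construct the approximating sequence $\omega_\epsilon$ as smooth solutions of a twisted K\"ahler-Einstein equation that degenerates to the conical KE equation as the smoothing parameter $\epsilon\to 0$. Fix a defining section $s\in H^0(X,-\lambda K_X)$ for $D$ and a smooth Hermitian metric $h$ on $-\lambda K_X$, and for $\epsilon\in(0,1]$ set
$$\chi_\epsilon \ :=\ \frac{1}{\lambda}\Theta_h + \frac{\sqrt{-1}}{\lambda}\partial\bar\partial\log(|s|_h^2+\epsilon^2),$$
a smooth closed $(1,1)$-form which is non-negative (by a Cauchy-Schwarz computation in the fibres of $-\lambda K_X$) and converges as currents to $\lambda^{-1}[D]$ as $\epsilon\to 0$. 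We seek smooth K\"ahler metrics $\omega_\epsilon$ in the appropriate cohomology class satisfying the twisted equation
$$\mathrm{Ric}(\omega_\epsilon)\ =\ \mu\,\omega_\epsilon + (1-\beta)\,\chi_\epsilon, \qquad \mu:=1-\lambda(1-\beta)>0,$$
which is equivalent to a smooth complex Monge-Amp\`ere equation whose formal $\epsilon=0$ limit is solved by $\omega$ itself.

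The existence of $\omega_\epsilon$ for each small $\epsilon>0$ is the core analytic step, handled by a continuity method in $\epsilon$. Openness at any solution uses the implicit function theorem applied to the linearization $\Delta_{\omega_\epsilon}+\mu\,\mathrm{Id}$; the strictly positive twisting rules out obstructions to invertibility by a Matsushima/Lichnerowicz-type argument adapted to the twisted setting. Closedness requires uniform $C^0$ and higher-order a priori estimates on the Monge-Amp\`ere potentials $\phi_\epsilon$, obtained from Kolodziej-type pluripotential bounds (the right-hand side $(|s|_h^2+\epsilon^2)^{-(1-\beta)}$ is uniformly in $L^p$ for some $p>1$ since $\beta>1-\lambda^{-1}$) together with Yau/Evans-Krylov interior estimates; properness of the associated Ding functional, supplied precisely by the positivity of $\chi_\epsilon$, prevents escape to infinity in the space of potentials.

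Granting existence, the positive Ricci bound is built into the equation: $\mathrm{Ric}(\omega_\epsilon) = \mu\omega_\epsilon + (1-\beta)\chi_\epsilon \geq \mu\omega_\epsilon > 0$, and Myers' theorem yields a uniform diameter bound depending only on $\beta_0,\lambda,n$. The Gromov-Hausdorff convergence $(X,\omega_\epsilon)\to(X,\omega)$ then follows in two pieces: smooth $C^{2,\alpha}$-convergence on compact subsets of $X\setminus D$, from the uniform potential estimates and elliptic regularity for the Monge-Amp\`ere equation; and control of tubular neighborhoods of $D$ via Bishop-Gromov volume comparison (enabled by the Ricci lower bound) together with explicit model computations in the transverse direction, where a conical $2\pi\beta$ wedge is smoothed to a rounded cap of comparable intrinsic scale. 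The main obstacle is precisely the existence and uniform estimates in the Fano range $\mu>0$, where untwisted smooth KE equations typically fail to have solutions; the decisive input is the positive twisting $(1-\beta)\chi_\epsilon$, which simultaneously supplies the coercivity needed to close the estimates and the Ricci lower bound that drives the geometric convergence.
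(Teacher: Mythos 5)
There is a genuine gap, and it sits exactly at the point you flag as ``the core analytic step.'' You propose to solve the twisted equation $\mathrm{Ric}(\omega_\epsilon)=\mu\,\omega_\epsilon+(1-\beta)\chi_\epsilon$ with $\mu=1-\lambda(1-\beta)>0$, and you assert that closedness follows because ``properness of the associated Ding functional \emph{is supplied precisely by the positivity of} $\chi_\epsilon$.'' That is false: positivity of the twisting form does not yield properness or a $C^0$ bound when the untwisted coefficient $\mu$ is positive. (Take $\theta\geq 0$ small and $\mu>0$: the equation $\mathrm{Ric}(\omega)=\mu\omega+\theta$ is an honest Fano-type Monge--Amp\`ere equation and can fail to be solvable, just as the ordinary K\"ahler--Einstein equation can.) In the paper, properness of the twisted K-energy is a theorem (Proposition \ref{prop3.6}) whose proof \emph{uses the hypothesis that the conical KE metric exists}, together with Song--Wang's vanishing of tangential holomorphic vector fields, Donaldson's openness, Berndtsson's convexity, and Berman's results; it is the hard input of Section 3, not a formal consequence of $\chi_\epsilon>0$. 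A secondary problem: your continuity method is ``in $\epsilon$,'' which has no obvious starting point, since at $\epsilon=1$ the twisted equation is still of Fano type. The paper instead runs the path in $t\in[0,\beta]$ at fixed $\epsilon$, starting from a Calabi--Yau solution that exists unconditionally by Yau's theorem, and uses monotonicity of the twisted K-energy along the path plus the properness above to close the estimates.

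You should also note that you are proving more than Theorem \ref{thm1} asks, and paying for it. Theorem \ref{thm1} only needs \emph{positive} Ricci curvature, not the uniform lower bound $\mathrm{Ric}(\omega_\epsilon)\geq\mu\,\omega_\epsilon$ (that stronger statement is Theorem \ref{thm2}). The paper's route to Theorem \ref{thm1} avoids the twisted KE equation entirely: first smooth the volume form and solve a Calabi--Yau equation to get smooth potentials $\varphi_\epsilon$ with uniform $C^\gamma$ control (Kolodziej); then solve the \emph{prescribed-volume-form} equation (\ref{SKE:3}), whose right-hand side involves the already-constructed $\varphi_\epsilon$ rather than the unknown, so Yau's theorem applies with no properness input. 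The identity in Proposition \ref{prop2.1} then gives $\mathrm{Ric}(\omega_{\psi_\epsilon})\geq\beta\,\omega_{\varphi_\epsilon}>0$ for free. The diameter bound comes from Myers applied to the conical metric itself (using geodesic convexity of $X\setminus D$) plus the Gromov--Hausdorff approximation, and the convergence near $D$ comes from the explicit upper bound $\omega_{\psi_\epsilon}\leq C(\epsilon+|S|_h^2)^{-(1-\beta)}\omega_0$, which shows every point of a $\delta$-tube around $D$ lies within distance $O(\delta^\beta)$ of its boundary --- more direct than the Bishop--Gromov argument you sketch. If you want to pursue your twisted-KE route, you must either supply the properness argument of Proposition \ref{prop3.6} or restructure the existence proof as the paper does.
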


This suffices for most of our applications but we also prove a sharper statement.
\begin{theo}
If $\omega$ is a K\"ahler-Einstein metric with cone angle $2\pi \beta$ along $D$ then $(X,\omega)$ is the Gromov-Hausdorff limit of a sequence of smooth K\"ahler metrics $\omega_{i}$ with  
$ Ric(\omega_{i})\geq (1-\lambda(1-\beta))\omega_{i}$.\label{thm2}
\end{theo}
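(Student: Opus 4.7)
The strategy is to regularize the Monge--Amp\`ere equation satisfied by the cone K\"ahler--Einstein metric $\omega$ so that the regularized solutions $\omega_\epsilon$ are smooth K\"ahler metrics whose Ricci lower bound can be read off the equation directly.

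Set $\mu = 1-\lambda(1-\beta)$. Fix a smooth Hermitian metric $h_0$ on $-\lambda K_X$ whose curvature is $\lambda\omega_0$ for a smooth K\"ahler form $\omega_0 \in 2\pi c_1(X)$, and write $\omega = \omega_0 + i\p\bar\p\varphi$. Then $\varphi$ satisfies the singular equation
\[
(\omega_0+i\p\bar\p\varphi)^n \;=\; \frac{e^{f_0-\mu\varphi}}{|s|_{h_0}^{2(1-\beta)}}\,\omega_0^n,
\]
for a definite smooth function $f_0$. For $\epsilon>0$, I would replace $|s|^2$ by $|s|^2+\epsilon$ and seek smooth solutions $\varphi_\epsilon$ of
\[
(\omega_0+i\p\bar\p\varphi_\epsilon)^n \;=\; \frac{c_\epsilon\, e^{f_0-\mu\varphi_\epsilon}}{(|s|_{h_0}^2+\epsilon)^{1-\beta}}\,\omega_0^n,
\]
with $c_\epsilon \to 1$ chosen to balance volumes. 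Existence is then obtained by a continuity argument in $\epsilon$, anchored at $\epsilon=0$ by the known cone KE solution: a priori estimates of Yau type close the continuity path, while openness comes from the implicit function theorem for the linearization $\Delta_{\omega_\epsilon}+\mu$ (together with a $K$-energy properness argument to handle the kernel when $\mu>0$).

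Taking $-i\p\bar\p\log$ of both sides of the regularized equation gives
\[
Ric(\omega_\epsilon) \;=\; \mu\,\omega_\epsilon \;+\; (1-\beta)\,\xi_\epsilon, \qquad \xi_\epsilon \;:=\; i\p\bar\p\log(|s|_{h_0}^2+\epsilon)+\lambda\omega_0,
\]
so the desired bound $Ric(\omega_\epsilon)\geq\mu\omega_\epsilon$ reduces to the pointwise semipositivity $\xi_\epsilon\geq 0$. This is a regularized Poincar\'e--Lelong identity: locally, writing $|s|^2 = |s_{\rm loc}|^2 e^{-\phi}$ with $i\p\bar\p\phi = \lambda\omega_0$, one computes $\xi_\epsilon = i\p\bar\p\log(|s_{\rm loc}|^2 + \epsilon e^\phi)$, and the Cauchy--Schwarz inequality $g\cdot i\p\bar\p g \geq i\p g\wedge\bar\p g$ applied to $g = |s_{\rm loc}|^2+\epsilon e^\phi$ verifies semipositivity (with a surplus proportional to $\epsilon\cdot\lambda\omega_0$). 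Since $1-\beta\geq 0$, this gives the claimed Ricci inequality.

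Finally, I would establish Gromov--Hausdorff convergence $(X,\omega_\epsilon)\to(X,\omega)$. Uniform Ko\l odziej-type estimates for the regularized equation yield $C^0$ convergence $\varphi_\epsilon \to \varphi$, and elliptic regularity upgrades this to $C^\infty_{\rm loc}$ convergence on $X\setminus D$. The outstanding step is to control distances across $D$, where the approximating metrics transition from nearly conical to genuinely smooth; I expect this, rather than the construction itself, to be the main obstacle, since the $\epsilon$-independent estimates near $D$ (in particular the Chern--Lu and Laplacian comparison arguments needed to bound $\omega_\epsilon$ from above by the model cone and thereby control tube-neighborhood distances) must be adapted to the almost-conical geometry. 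Once uniform convergence of the distance functions is in hand, the Gromov--Hausdorff convergence follows.
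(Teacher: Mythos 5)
Your reduction of the Ricci bound to the semipositivity of $\xi_\epsilon=i\partial\bar\partial\log(|s|^2_{h_0}+\epsilon)+\lambda\omega_0$ is correct, and your target equation is the same twisted equation the paper ultimately solves (equation (\ref{eqn3.4}) at $t=\beta$). The genuine gap is in how you propose to solve it. You run a continuity method in $\epsilon$ anchored at the singular cone metric and assert that ``a priori estimates of Yau type close the continuity path,'' with properness relegated to ``handling the kernel'' in the openness step. This has the two halves of the argument backwards, and the half you wave at is the crux of the whole theorem. Since $\mu=\beta>0$ in the Fano range, the unknown enters the right-hand side as $e^{-\mu\varphi_\epsilon}$ with the \emph{unfavourable} sign: the maximum principle gives no zeroth-order bound, Yau's and Kolodziej's estimates do not apply, and a uniform $C^0$ estimate is essentially equivalent to a properness statement for the twisted K-energy $E_{\epsilon,(1-\beta)D}$, \emph{uniform in $\epsilon$}. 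Establishing that properness is itself the substantial input: the paper needs the absence of holomorphic fields tangent to $D$ (Song--Wang), the openness theorem to pass to a slightly larger angle $\beta'$, Berndtsson--Berman to bound the twisted functional below at $\beta'$, properness at small $\beta''$, and linearity in $\beta$ to interpolate (Proposition \ref{prop3.6}), plus the comparison $J_{\chi_\epsilon}\geq J_\chi-C_3$ of Lemma \ref{lem3.5} to make it uniform in $\epsilon$. None of this appears in your sketch. Separately, openness at your anchor $\epsilon=0$ is also problematic: the reference solution is singular, and the densities $(|s|^2+\epsilon)^{-(1-\beta)}$ are not small perturbations of $|s|^{-2(1-\beta)}$ near $D$ in any norm on which the linear theory for the cone Laplacian gives invertibility, so the implicit function theorem does not get the path started.

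The paper avoids both difficulties by a two-stage construction with a continuity path in an auxiliary parameter $t$ rather than in $\epsilon$. First it solves prescribed-volume-form equations (\ref{SKE:3}) --- where the unknown does not appear exponentially, so Yau plus Kolodziej apply --- producing smooth metrics $\omega_{\psi_\epsilon}$ with $Ric(\omega_{\psi_\epsilon})\geq\beta\,\omega_{\varphi_\epsilon}>0$; note the lower bound is against a \emph{different} metric, which is why this stage proves Theorem \ref{thm1} but not Theorem \ref{thm2}. Then, for each fixed $\epsilon$, it deforms from $\psi_\epsilon$ at $t=0$ to the twisted K\"ahler--Einstein metric at $t=\beta$ along $Ric(\omega_{\phi_\epsilon(t)})=t\omega_{\phi_\epsilon(t)}+(\beta-t)\omega_{\varphi_\epsilon}+(1-\beta)\chi_\epsilon$: openness is standard since everything is smooth, and closedness follows from monotonicity of the twisted K-energy along the path (Lemma \ref{lem3.9}), its uniform properness, and a uniform bound on the energy of the starting point (Lemma \ref{lem3.7}). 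To repair your plan you would need to replace the $\epsilon$-continuity method by such a scheme (or a direct variational argument), supply the properness of the twisted K-energy, and add a uniqueness theorem (Berndtsson's, or the implicit-function argument in the cone H\"older spaces) to identify the limit with the original $\omega$ --- uniform Kolodziej estimates give only subsequential convergence to \emph{some} potential. Your closing discussion of the Gromov--Hausdorff step is, by contrast, essentially the paper's argument (Theorem \ref{thm2.2} and Propositions \ref{prop2.4}, \ref{prop2.5}) and is fine.
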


One consequence of our approximation results is a uniform bound on the Sobolev constant; this bound has also been obtained by Jeffres, Mazzeo and Rubinstein in \cite{JMR}.

A K\"ahler-Einstein metric with cone angle $2\pi \beta > 0$ along the divisor $D$,   satisfies
the equation of  currents:
\begin{equation}
Ric(\omega) = (1-(1-\beta)\lambda) \omega + 2\pi (1-\beta) [D], 
\end{equation}
where $[D]$ is the current of integration along $D$.
To prove our theorems, we first approximate $[D]$ by a sequence of smooth positive forms,
and solve the corresponding complex Monge-Amp\`ere equations; then we show that this sequence of solutions converges to the initial K\"ahler-Einstein metric as expected.  We will make this more precise in Section \ref{sec 2}. \\

 We will treat the case when $\lambda = 1$. The general case can be done in an identical way. \\

In this article we fix $\omega_0$ to be a smooth K\"ahler form in $2\pi c_1(X)$. Set the space of smooth K\"ahler potentials to be 
\[
{\cal H} = \{\varphi \in C^\infty(X;\mathbb R): \omega_0 + \sqrt{-1}\p \bar \partial \varphi > 0 \;\;{\rm in}\;\; X\}.
\]

\begin{defi} A  K\"ahler metric $\omega'$ on $X$ with cone angle $2\pi\beta$ along $D$ is a current in $2\pi c_1(X)$ such that 
\begin{enumerate}
\item $\omega'$ is a closed positive $(1,1)$ current on $X$, and is a smooth K\"ahler metric in  $X\setminus D;$
\item for any point $p \in D,\;$ there exists a chart $(\mathcal U, \{z_i\})$  
so that $z_1$ is a local defining function for  $D$ and on this chart the metric is uniformly equivalent to the standard cone metric: 
\[
\sqrt{-1}\sum_{j=2}^n dz_j\wedge d\bar{z}_j+ \sqrt{-1}|z_1|^{2\beta-2} dz_1 \wedge d\bar { z}_1.
\]
\end{enumerate}
\end{defi}

For any $\beta\in (0,1]$, let  $\hat{\cal H}_{\beta}$ be the space of all potentials $\varphi$ such that  $\omega_0+\sqrt{-1}\p\bar \p \varphi$ 
is a  K\"ahler metric on $X$ with cone angle $2\pi\beta$ along $D$.  It is well-known that,
for any $\varphi \in \cal H$, and for $\epsilon$ small enough (which may depend on $\varphi$),  we have
\[
\varphi + \epsilon |S|^{2\beta}_{h} \in \hat{\cal H}_{\beta}, 
\]
where $h$ is  a smooth Hermitian metric on $-K_X$ with Ricci curvature $\omega_0$ and $S$ is the defining section of $D$.

 There are other definitions of metrics with cone singularities which, for K\"ahler-Einstein metrics, turn out to be equivalent. One definition is to require that a local K\"ahler potential lies in a version of H\"older space ${\cal C}^{2,\gamma, \beta}\; $  for $0<\gamma < {1\over {\beta}} - 1$ \cite{Dona11}.  The equivalence with Definition 1.3, for K\"ahler-Einstein metrics, is proved in \cite{JMR}  (Theorem 2). Higher regularity is also established in \cite{JMR}, including the fact that such metrics have an asymptotic expansion about  points on the divisor.

\section{Proof of Theorem \ref{thm1}} \label{sec 2}

Suppose $\omega_{\varphi_\beta}$ is a K\"ahler-Einstein metric on $X$ with cone angle 
$ 2\pi \beta$ along a smooth anti-canonical divisor $D$. Let $S$ be the defining section of $D$ in $H^0(X, -K_X)$.  By \cite{Dona11},  \cite{JMR}, it satisfies 
the following Monge-Amp\`ere equation
\begin{equation}
\omega_{\varphi_\beta}^{n} = e^{-\beta \varphi_{\beta} + h_{\omega_0}} {\omega_0^{n}\over {|S|_{h}^{2(1-\beta)}}},\qquad \qquad {\rm on}\;\; X\setminus D. \label{SKE:1}
\end{equation}
where $h_{\omega_0}$ is the Ricci potential of $\omega_0$ and we have chosen the normalization of $\varphi_\beta$ so that
$$\int_X e^{-\beta \varphi_{\beta} + h_{\omega_0}} {\omega_0^{n}\over {|S|_{h}^{2(1-\beta)}}}=\int_X \omega_0^n.$$
  To prove Theorem \ref{thm1}, we need to achieve the following
three goals simultaneously:
\begin{enumerate}
\item Approximate $\omega_{\varphi_\beta}$ by smooth K\"ahler metrics on $X\;$ locally smoothly away from $D$;
\item  The Ricci curvature of this sequence of metrics is positive and diameter is uniformly bounded from above;
\item  The Gromov-Hausdorff limit of this sequence of metrics is precisely the metric $(X, \omega_{\varphi_\beta})$.
\end{enumerate}

To achieve the first goal, we want to smooth the volume form of $\omega_{\varphi_\beta}$ first, and then use the Calabi-Yau theorem on $X\;$ to smooth the potential $\varphi_\beta$. Fix $p_0\in (1, (1-\beta_0)^{-1})$. Note that the volume form of $\omega_{\varphi_\beta}$ is bounded in $L^{p_0}$.  We can find a family of smooth
volume forms $\eta_\epsilon (\epsilon \in (0, 1])$ with $\int_X\eta_\epsilon=\int_X \omega_0^n$, which converges to $\omega_{\varphi_\beta}^n $ strongly in $L^{p_0}$ 
and  smoothly away from $D$.  For each $\eta_\epsilon$, by the Calabi-Yau theorem we can find a
smooth K\"ahler  potential $\varphi_\epsilon \in \cal H$ such that 
\[
     \omega_{\varphi_\epsilon}^n = \eta_\epsilon.
\]

  Following \cite{Kolo98},  we obtain  a uniform bound on $||\varphi_\epsilon||_{C^\gamma(X)}$ for some $\gamma\in (0,1)$.   This bound  and $\gamma$ depend only on $X, D$, $\omega_0$ and the $L^{p_0}$ norm of $\omega_{\varphi_\beta}^n\over \omega_0^n$.  Furthermore $\{\varphi_\epsilon\}$ converges by sequence to $\varphi_\beta$ in $C^{\gamma'}(X)$ for some $\gamma'$ slightly smaller than $\gamma$.

To achieve our second goal, we need to modify the volume form to secure positive Ricci curvature. 
Following Yau \cite{Yau78}, we can solve the following equation for $\epsilon \in (0, 1]:\;$ 
\begin{equation}
\omega_{\psi_{\epsilon}}^{n } =  e^{-\beta \varphi_{\epsilon} + h_{\omega_0}} { \omega_0^n \over {(|S|_{h}^{2} + \epsilon)^{1-\beta}}}.   \label{SKE:3}
\end{equation}
Here  we need to normalize $\varphi_\epsilon$ so that
$$\int_{X}\; e^{-\beta \varphi_{\epsilon} + h_{\omega_0}} { \omega_0^n \over {(|S|_{h}^{2} + \epsilon)^{1-\beta}}}  = \int_X \omega_{0}^{n}.$$

In Theorem 8 of \cite{Yau78}, Yau treated a more general case with meromorphic right hand side.   Note our initial approximation $\varphi_\epsilon$ is smooth but does not have high regularity control outside $D$, and we will discuss a bit more after Proposition \ref{prop2.3}. There are some similarities in our approach and the work of  Campana, Guenancia, and Paun \cite{CGP11}.   For more recent work on complex Monge-Amp\`ere equation on K\"ahler manifolds and generalizations,  we refer to \cite{EGZ} for further references.  \\ 

A direction calculation shows that

\begin{prop} The Ricci form of $\omega_{\psi_{\epsilon}}$ approximates $\beta \omega_\beta + (1-\beta) [D] $  as $\epsilon \rightarrow 0$. Moreover,
\[
Ric(\omega_{\psi_{\epsilon}} ) \geq \beta \omega_{\varphi_{\epsilon}}> 0, \qquad \forall\  \epsilon \in (0, 1].
\]\label{prop2.1}
\end{prop}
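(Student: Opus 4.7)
The plan is to differentiate the Monge-Amp\`ere equation \eqref{SKE:3} to obtain a direct expression for $Ric(\omega_{\psi_\epsilon})$, and then reduce the positivity claim to a local plurisubharmonicity statement. Taking $-\sqrt{-1}\partial\bar\partial\log$ of both sides of \eqref{SKE:3} and using $Ric(\omega_0)=\omega_0+\sqrt{-1}\partial\bar\partial h_{\omega_0}$, the Ricci-potential contribution cancels and I obtain
\[
Ric(\omega_{\psi_\epsilon})
= \omega_0 + \beta\sqrt{-1}\partial\bar\partial\varphi_\epsilon + (1-\beta)\sqrt{-1}\partial\bar\partial\log\bigl(|S|_h^2+\epsilon\bigr)
= \beta\,\omega_{\varphi_\epsilon} + (1-\beta)\,T_\epsilon,
\]
where $T_\epsilon := \omega_0 + \sqrt{-1}\partial\bar\partial \log(|S|_h^2+\epsilon)$. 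The pointwise inequality asserted in the proposition thus reduces to showing $T_\epsilon \ge 0$; strict positivity $Ric(\omega_{\psi_\epsilon}) > 0$ is then automatic since $\omega_{\varphi_\epsilon}$ is a smooth K\"ahler form.

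The crux is the positivity of $T_\epsilon$, which I would verify locally. Trivialize $-K_X$ on a small chart, write the Hermitian metric as $h = e^{-\phi}$ so that $\sqrt{-1}\partial\bar\partial\phi = \omega_0$, and let $s$ denote the holomorphic function representing $S$; then $|S|_h^2 = |s|^2 e^{-\phi}$. The identity $|s|^2 e^{-\phi} + \epsilon = e^{-\phi}(|s|^2 + \epsilon e^{\phi})$ gives
\[
\log\bigl(|S|_h^2 + \epsilon\bigr) = \log\bigl(|s|^2 + \epsilon e^{\phi}\bigr) - \phi,
\]
so $T_\epsilon = \sqrt{-1}\partial\bar\partial \log(|s|^2 + \epsilon e^{\phi})$. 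Writing $\log(|s|^2 + \epsilon e^{\phi}) = \log\bigl(e^{\log|s|^2} + e^{\log\epsilon + \phi}\bigr)$, both $\log|s|^2$ (as $s$ is holomorphic) and $\phi$ (as $\omega_0>0$) are plurisubharmonic, and the log-sum-exp function $(x,y)\mapsto \log(e^x+e^y)$ is convex and non-decreasing in each variable, so the composition is plurisubharmonic. Hence $T_\epsilon \ge 0$.

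For the weak convergence statement, as $\epsilon \to 0$ the Poincar\'e-Lelong formula gives $\sqrt{-1}\partial\bar\partial \log(|S|_h^2 + \epsilon) \to 2\pi[D] - \omega_0$ as currents, so $T_\epsilon \to 2\pi[D]$; meanwhile the $C^{\gamma'}$-convergence $\varphi_\epsilon \to \varphi_\beta$ recorded above yields $\omega_{\varphi_\epsilon} \to \omega_\beta$ as currents. Combining,
\[
Ric(\omega_{\psi_\epsilon}) \;\longrightarrow\; \beta\,\omega_\beta + 2\pi(1-\beta)[D],
\]
which is the first assertion (consistent with the $2\pi$-normalization already used in (1.1)).

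The only non-routine ingredient is the positivity of $T_\epsilon$: the regularization $|S|_h^2 + \epsilon$ is tuned precisely so that after extracting the factor $e^{-\phi}$ what remains is a manifestly plurisubharmonic expression. Everything else is bookkeeping with Poincar\'e-Lelong and the already-established convergence of the potentials.
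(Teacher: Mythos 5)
Your proposal is correct, and the first step---differentiating the Monge--Amp\`ere equation to get $Ric(\omega_{\psi_\epsilon})=\beta\,\omega_{\varphi_\epsilon}+(1-\beta)T_\epsilon$ with $T_\epsilon=\omega_0+\sqrt{-1}\partial\bar\partial\log(|S|^2_h+\epsilon)$---is exactly the paper's decomposition. Where you diverge is in proving $T_\epsilon\geq 0$: the paper uses the pointwise identity
\[
\sqrt{-1}\partial\bar\partial\log(f+\epsilon)=\frac{f}{f+\epsilon}\,\sqrt{-1}\partial\bar\partial\log f+\epsilon\,\frac{\sqrt{-1}\,\partial f\wedge\bar\partial f}{f(f+\epsilon)^2}
\]
applied to $f=|S|^2_h$ on $X\setminus D$ (where $\sqrt{-1}\partial\bar\partial\log|S|^2_h=-\omega_0$), which yields the quantitative lower bound $T_\epsilon\geq\frac{\epsilon}{|S|^2_h+\epsilon}\,\omega_0$ and then extends to all of $X$ by smoothness of $\psi_\epsilon$; you instead trivialize locally, absorb the factor $e^{-\phi}$, and invoke plurisubharmonicity of $\log(e^{u}+e^{v})$ for psh $u,v$. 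Your route is slightly cleaner in that the local expression $\sqrt{-1}\partial\bar\partial\log(|s|^2+\epsilon e^{\phi})$ is manifestly smooth and nonnegative on all of $X$, including along $D$, with no continuity-extension step. The paper's identity buys more: the explicit strict lower bound $\chi_\epsilon=T_\epsilon\geq\frac{\epsilon}{|S|^2_h+\epsilon}\,\omega_0>0$, which is recorded in the proof and matters later (Section 3 asserts $\chi_\epsilon>0$). Your argument as written only gives $T_\epsilon\geq 0$, though it recovers the same strict bound if you keep the term $\chi_v\,\sqrt{-1}\partial\bar\partial v$ with $v=\log\epsilon+\phi$ in the composition formula, since $\chi_v=\frac{\epsilon e^{\phi}}{|s|^2+\epsilon e^{\phi}}$. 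For the proposition as stated, $T_\epsilon\geq 0$ together with $\omega_{\varphi_\epsilon}>0$ suffices, and your convergence discussion matches the paper's.
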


\begin{proof}  For any smooth function $f > 0$, we have (c.f. \cite{Yau78})
\[
  \begin{array}  {lcl} \sqrt{-1}\p \bar \p \log (f + \epsilon)  & = & \sqrt{-1}\p {{\bar \p f}\over {f+\epsilon}}  \\
  & = &   {{\sqrt{-1}\p \bar \p f}\over {f+\epsilon}} -{{\sqrt{-1}\p f \wedge \bar \p f}\over {(f+\epsilon)^{2}}} 
  \\ & = & {f \over {f+\epsilon}} \left(  {{\sqrt{-1}\p \bar \p f}\over {f}} -{{\sqrt{-1}\p f \wedge \bar \p f}\over {f^{2}}}  + {{\sqrt{-1}\p f \wedge \bar \p f}\over {f^{2}}}\right)  -{{\sqrt{-1}\p f \wedge \bar \p f}\over {(f+\epsilon)^{2}}} \\
  &  = & {f \over {f+\epsilon}}\sqrt{-1} \p \bar \p \log f   + \epsilon {{\sqrt{-1}\p f \wedge \bar \p f} \over {f (f+\epsilon)^{2}}} 
  \\& \geq & {f \over {f+\epsilon}}\sqrt{-1} \p \bar \p \log f.  \end{array}
\]

Using this, we can calculate the Ricci form on $X\setminus D$: 

 \[
\begin{array}{lcl}  Ric (\omega_{\psi_{\epsilon}})
& = &   -  \sqrt{-1} \p \bar \p h_{\omega_0}  +  (1-\beta) \sqrt{-1} \p \bar \p \log (|S|^{2}_{h} +\epsilon) + Ric(\omega_0) + \beta \sqrt{-1} \partial \b \partial \varphi_{\epsilon}  \\
& = & \omega_0 + (1-\beta) \sqrt{-1} \p \bar \p \log (|S|^{2}_{h} +\epsilon) + \beta  \sqrt{-1} \partial \b \partial \varphi_{\epsilon}\\
&=& \beta\omega_{\varphi_\epsilon}+(1-\beta)(\omega_0+\sqrt{-1} \p \bar \p \log (|S|^{2}_{h} +\epsilon))\\
&=& \beta\omega_{\varphi_\epsilon}+(1-\beta)\frac{\epsilon}{|S|^2_h+\epsilon} \omega_0\\
&\geq& \beta\omega_{\varphi_\epsilon}.
\end{array}
\]
Since $\psi_\epsilon$ is smooth this also holds on the whole $X$. 
\end{proof}
For later purpose we denote	 
 \begin{equation}
 (1-\beta) \chi_{\epsilon} = Ric (\omega_{\psi_{\epsilon}}) -\beta \omega_{\varphi_{\epsilon}}. 
 \label{positiveform1}
 \end{equation}
By the previous calculation, this converges to  $2\pi (1-\beta)[D]$ in the sense of currents.  

Now we derive estimates on $\omega_{\psi_\epsilon}$. We make the convention that unless otherwise emphasized all constants appearing below are positive and depend only on $X, D, \omega_0,  \omega_{\varphi_\beta}$. Also the norms of the functions appearing in this article are always taken with respect to the background metric $\omega_0$. 

\begin{theo} There exists a uniform constant $C_1>0$ such that  for any $\epsilon \in (0, 1]$,
\[
 C_1^{-1}\omega_0 < \omega_{\psi_{\epsilon}} \leq {C_1\over (\epsilon + |S|^{2}_{h})^{1-\beta}} \cdot \omega_0.
\]\label{thm2.2}
\end{theo}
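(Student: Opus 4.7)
The plan is to first establish the lower bound $\omega_{\psi_\epsilon}\geq C_1^{-1}\omega_0$ by means of Yau's Schwarz lemma (Chern--Lu inequality), and then to deduce the matching upper bound algebraically from the Monge-Amp\`ere equation (\ref{SKE:3}). The crucial input is the Ricci positivity of $\omega_{\psi_\epsilon}$ already recorded in Proposition \ref{prop2.1}.

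As a preparatory step, the right-hand side $F_\epsilon:=e^{-\beta\varphi_\epsilon+h_{\omega_0}}/(|S|_h^{2}+\epsilon)^{1-\beta}$ of (\ref{SKE:3}) is uniformly bounded in $L^{p_0}$ for any $p_0\in(1,(1-\beta_0)^{-1})$: indeed $\varphi_\epsilon$ is uniformly $C^0$-bounded (cf.\ the discussion after (\ref{SKE:1})), and $\int_X|S|_h^{-2p_0(1-\beta)}\,\omega_0^n<\infty$ for such $p_0$ since $\beta\geq\beta_0$. Kolodziej's $L^{\infty}$ estimate then yields a uniform bound $\|\psi_\epsilon\|_\infty\leq C$.

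For the lower bound, Proposition \ref{prop2.1} gives $Ric(\omega_{\psi_\epsilon})\geq 0$, so applying the Chern--Lu inequality to the identity map $(X,\omega_{\psi_\epsilon})\to(X,\omega_0)$, and using that the holomorphic bisectional curvature of the fixed metric $\omega_0$ is bounded above by some constant $K$, we obtain on $X$ the pointwise inequality
\[
\Delta_{\omega_{\psi_\epsilon}}\log\mathrm{tr}_{\omega_{\psi_\epsilon}}\omega_0\;\geq\;-K\,\mathrm{tr}_{\omega_{\psi_\epsilon}}\omega_0.
\]
Now set $H_\epsilon:=\log\mathrm{tr}_{\omega_{\psi_\epsilon}}\omega_0-A\psi_\epsilon$ with $A>K$ and use the identity $\Delta_{\omega_{\psi_\epsilon}}\psi_\epsilon = n-\mathrm{tr}_{\omega_{\psi_\epsilon}}\omega_0$. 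At a maximum point $p^{\ast}$ of $H_\epsilon$ on the compact manifold $X$ (where $\psi_\epsilon$ is smooth), the maximum principle forces $(A-K)\,\mathrm{tr}_{\omega_{\psi_\epsilon}}\omega_0(p^{\ast})\leq An$. Combined with the uniform $C^{0}$-bound on $\psi_\epsilon$, this gives a global bound $\mathrm{tr}_{\omega_{\psi_\epsilon}}\omega_0\leq C'$, equivalent to $\omega_{\psi_\epsilon}\geq(C')^{-1}\omega_0$.

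The upper bound then follows by pure algebra. Diagonalising $\omega_{\psi_\epsilon}$ with respect to $\omega_0$ with eigenvalues $\lambda_1,\ldots,\lambda_n$, the lower bound just obtained gives $\lambda_j\geq(C')^{-1}$ for every $j$, while (\ref{SKE:3}) gives $\prod_j\lambda_j=F_\epsilon\leq C''/(|S|_h^2+\epsilon)^{1-\beta}$, using that $\varphi_\epsilon$ and $h_{\omega_0}$ are uniformly bounded. Hence each $\lambda_i\leq(C')^{n-1}F_\epsilon$, and summing yields
\[
\mathrm{tr}_{\omega_0}\omega_{\psi_\epsilon}\;\leq\;\frac{n(C')^{n-1}C''}{(|S|_h^2+\epsilon)^{1-\beta}},
\]
which implies the stated upper bound. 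The only real obstacle is ensuring that every constant appearing above is $\epsilon$-independent; this is automatic, as $\omega_0$, the upper bound on its bisectional curvature, and the uniform bounds on $\varphi_\epsilon$, $\psi_\epsilon$, and $F_\epsilon$ are all insensitive to $\epsilon\in(0,1]$.
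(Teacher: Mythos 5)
Your proposal is correct and takes essentially the same route as the paper: a uniform $L^{p_0}$ bound on the right-hand side of (\ref{SKE:3}) feeding into Kolodziej's estimate for $\|\psi_\epsilon\|_{C^0}$, the Chern--Lu inequality together with the Ricci positivity of Proposition \ref{prop2.1} for the lower bound, and then the Monge--Amp\`ere equation for the upper bound. The only difference is that you spell out details the paper leaves implicit (the integrability of $|S|_h^{-2p_0(1-\beta)}$ and the eigenvalue argument for the upper bound), which is fine.
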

\begin{proof}

First,  by construction we have a constant $c_1$ and $p_0>1$ such that 
$$||\frac{\omega_{\psi_\epsilon}^n}{\omega_0^n}||_{L^{p_0}}\leq c_1. $$
By a theorem of  Kolodziej \cite{Kolo98}, this implies $||\psi_{\epsilon}||_{C^{\gamma}(X)}\leq c_2$ for some $\gamma \in(0,1)$.
To derive $C^{2}$ estimate, we view the identity map $id: (X, \omega_{\psi_{\epsilon}}) \rightarrow (X, \omega_0)$  as a harmonic map
with energy density
\[
e(\psi_\epsilon) = \tr_{\omega_{\psi_\epsilon}} \omega_0
\]
 Then, using the fact that $Ric(\omega_{\psi_\epsilon})> 0$ and $Rm(\omega_0)\leq c_3$,  $e(\psi_\epsilon)$ satisfies the following Chern-Lu differential inequality \cite{Lu67} (c.f. also \cite{JMR}): 
 \[
\triangle_{\psi_\epsilon} (\log e(\psi_\epsilon)-c_4\psi_\epsilon)\geq c_5 e(\psi_\epsilon)-c_6.
\]
Since $\psi_\epsilon$ is uniformly bounded by $c_2$,   
  by maximum principle, we have
 \[
 e(\psi_\epsilon) \leq c_6
 \]
 or
 \[
 c_6^{-1} \omega_0 \leq \omega_{\psi_\epsilon}.
 \]
 Plugging this into the  Monge-Amp\`ere equation (\ref{SKE:3}), we obtain
 
 \begin{equation}
   C_1^{-1} \omega_0 \leq \omega_{\psi_\epsilon} \leq  C_1\cdot ( |S|^{2}_{h} + \epsilon)^{- (1-\beta)} \omega_0.
   \label{eq:metriccontrol2}
 \end{equation}
 \end{proof}
 
 It follows that we have a uniform bound on $\Delta_{\omega_0}\psi_\epsilon$ locally away from $D$, and  a  global uniform bound on $||\psi_\epsilon||_{C^\gamma(X)}$.  So  $\{\psi_{\epsilon}\}$ by sequence converges to a limit potential $\psi_0$ globally  in $C^{\gamma'}(X)$ and in $C^{1, \alpha}$ locally away from $D$.

\begin{prop} We
 have $\psi_{0} = \varphi_{\beta} + \emph{constant}$. \label{prop2.3}
\end{prop}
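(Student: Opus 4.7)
My approach is to pass to the limit $\epsilon\to 0$ in the Monge-Amp\`ere equation (\ref{SKE:3}) for $\psi_\epsilon$ and then appeal to uniqueness of bounded solutions of the resulting complex Monge-Amp\`ere equation. Since $\varphi_\beta$ solves (\ref{SKE:1}), if the limit equation for $\psi_0$ coincides with (\ref{SKE:1}) then uniqueness will force $\psi_0-\varphi_\beta$ to be constant.

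First I handle the right-hand side of (\ref{SKE:3}). Recall that $\varphi_\epsilon\to\varphi_\beta$ in $C^{\gamma'}(X)$, so in particular uniformly, hence $e^{-\beta\varphi_\epsilon+h_{\omega_0}}$ converges uniformly to $e^{-\beta\varphi_\beta+h_{\omega_0}}$. The factor $(|S|_h^2+\epsilon)^{-(1-\beta)}$ increases monotonically as $\epsilon\downarrow 0$ to $|S|_h^{-2(1-\beta)}$, which lies in $L^{p_0}(\omega_0^n)$ because $p_0<(1-\beta_0)^{-1}\leq(1-\beta)^{-1}$ and $|S|_h$ vanishes to order one along $D$. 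By monotone (and dominated) convergence the right-hand side of (\ref{SKE:3}) converges in $L^{p_0}$, hence as measures, to the right-hand side of (\ref{SKE:1}), namely to $\omega_{\varphi_\beta}^n$.

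Next I handle the left-hand side. By Theorem \ref{thm2.2} and the discussion immediately following it, $\{\psi_\epsilon\}$ is uniformly bounded in $C^\gamma(X)$ and, along a subsequence, converges uniformly to the bounded $\omega_0$-plurisubharmonic function $\psi_0$. The Bedford-Taylor continuity of the complex Monge-Amp\`ere operator under uniformly convergent sequences of bounded quasi-plurisubharmonic functions then yields $\omega_{\psi_\epsilon}^n\to\omega_{\psi_0}^n$ weakly as positive measures on $X$. Combining the two convergence statements,
\[
\omega_{\psi_0}^n \;=\; e^{-\beta\varphi_\beta+h_{\omega_0}}\,\frac{\omega_0^n}{|S|_h^{2(1-\beta)}} \;=\; \omega_{\varphi_\beta}^n
\]
as measures on $X$, and by our normalizations both sides have total mass $\int_X\omega_0^n$.

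To conclude, both $\psi_0$ and $\varphi_\beta$ are bounded $\omega_0$-plurisubharmonic functions whose Monge-Amp\`ere measure equals the same density in $L^{p_0}$. Kolodziej's uniqueness theorem, equivalently the Bedford-Taylor comparison principle applied to bounded $\omega_0$-psh solutions with identical Monge-Amp\`ere measures, then forces $\psi_0-\varphi_\beta$ to be a constant. I expect the only delicate step to be the weak continuity of $\omega_{\psi_\epsilon}^n$ under merely $C^0$ convergence, but this is precisely what pluripotential theory provides once the uniform $L^\infty$ bound on $\psi_\epsilon$ supplied by Theorem \ref{thm2.2} is in hand; everything else reduces to bookkeeping with the already-established $C^{\gamma'}$ convergence of $\varphi_\epsilon$ and $\psi_\epsilon$.
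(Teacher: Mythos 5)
Your argument is correct, and it is essentially the ``short'' proof that the paper alludes to in the first sentence of its own proof (``this follows directly by the general uniqueness theorem for the Monge-Amp\`ere equation''), but it is not the proof the paper actually writes out. You pass to the limit first --- using $L^{p_0}$ convergence of the right-hand side of (\ref{SKE:3}) together with the Bedford--Taylor weak continuity of the Monge-Amp\`ere operator under uniform convergence of uniformly bounded quasi-plurisubharmonic functions --- and then invoke the uniqueness theorem of Kolodziej and Blocki for bounded solutions on a compact K\"ahler manifold. The paper instead runs a quantitative integration-by-parts (energy) argument directly on the smooth approximants: from $\omega_{\psi_\epsilon}^n-\omega_{\varphi_\epsilon}^n\to 0$ in $L^p$ it deduces $\int_X(\varphi_\epsilon-\psi_\epsilon)(\omega_{\psi_\epsilon}^n-\omega_{\varphi_\epsilon}^n)\to 0$, expands this as a sum of nonnegative gradient terms, and uses the lower bound $\omega_{\psi_\epsilon}\geq C_1^{-1}\omega_0$ from Theorem \ref{thm2.2} to conclude that $\partial(\psi_\epsilon-\varphi_\epsilon)\to 0$ in $L^2_{\rm loc}$ away from $D$. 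The paper's version is self-contained (it is in effect the proof of the uniqueness theorem specialized to this situation, carried out at the level of the smooth approximants) and needs no pluripotential-theoretic black boxes beyond what is already established; yours is shorter but outsources the two key steps to Bedford--Taylor convergence and to the compact-manifold uniqueness theorem, both of which are legitimately citable (the paper itself cites exactly these references). Two small points you should make explicit: (i) identifying the weak limit of the right-hand sides of (\ref{SKE:3}) with $\omega_{\varphi_\beta}^n$ as a measure on all of $X$ uses that the Monge-Amp\`ere measure of the bounded potential $\varphi_\beta$ charges no mass on the pluripolar set $D$, so that (\ref{SKE:1}), which a priori holds only on $X\setminus D$, determines the global measure; (ii) the normalizations of $\varphi_\epsilon$ in (\ref{SKE:3}) and of $\varphi_\beta$ in (\ref{SKE:1}) must be matched so that the limiting densities agree exactly rather than up to a positive constant factor. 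Both are routine.
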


\begin{proof}  This follows directly by the general uniqueness theorem for Monge-Amp\`ere equation \cite{Bedford76, Kolo03, Blocki}.  For the convenience of readers, we give a detailed account in our special case. Since 
\[
\omega_{\psi_{\epsilon}}^{n} - \omega_{\varphi_{\epsilon}}^{n}
\]
converges to $0$ as $\epsilon \rightarrow 0$  in $L^{p} $ topology for some fixed $p> 1$, we have
\[
\int_{X} \; (\varphi_{\epsilon} -\psi_{\epsilon}) (\omega_{\psi_{\epsilon}}^{n} -\omega_{\varphi_{\epsilon}}^{n}) \rightarrow 0.
\]
It follows that

\[
\begin{array}{lcl}
 & &\int_{X}\; \sqrt{-1} (\partial \psi_{\epsilon} - \partial \varphi_{\epsilon}) \wedge  (\bar \partial \psi_{\epsilon} - \bar \partial \varphi_{\epsilon}) \wedge
 \displaystyle \sum_{k=0}^{n-1} \omega_{\psi_{\epsilon}}^{k} \omega_{\varphi_{\epsilon}}^{n-1-k} \\
 & &   \qquad \qquad= -  \int_{X}\; (\psi_{\epsilon} -\varphi_{\epsilon}) (\omega_{\psi_{\epsilon}}^{n} - \omega_{\varphi_{\epsilon}}^{n}) \rightarrow 0.
\end{array}
\]
Positivity of the integrand means that for any $\delta>0$, we have
\[
\int_{X \setminus D_{\delta}}\; \sqrt{-1}(\partial \psi_{\epsilon} - \partial \varphi_{\epsilon}) \wedge  (\bar \partial \psi_{\epsilon} - \bar \partial \varphi_{\epsilon}) \wedge
 \displaystyle \sum_{k=0}^{n-1} \omega_{\psi_{\epsilon}}^{k} \omega_{\varphi_{\epsilon}}^{n-1-k} \rightarrow  0,
\]
where $D_{\delta}$ is the $\delta$-tubular neighborhood of $D$, defined by the metric $\omega_0$. Since every term is non-negative, we have
\[\int_{X \setminus D_{\delta}}\; \sqrt{-1}(\partial \psi_{\epsilon} - \partial \varphi_{\epsilon}) \wedge  (\bar \partial \psi_{\epsilon} - \bar \partial \varphi_{\epsilon}) \wedge \omega_{\psi_{\epsilon}}^{n-1}  \rightarrow  0,
\]
 By Theorem \ref{thm2.2},  $ \omega_{\psi_{\epsilon}} \geq C_1^{-1}\omega_0$. It follows
that 
\[\int_{X \setminus D_{\delta}}\; \sqrt{-1}(\partial \psi_{\epsilon} - \partial \varphi_{\epsilon}) \wedge  (\bar \partial \psi_{\epsilon} - \bar \partial \varphi_{\epsilon}) \wedge \omega_{0}^{n-1}  \rightarrow  0,
\]
In other words, 
\[
\partial \psi_{0} - \partial \varphi_{\beta} = 0, \qquad {\rm in}\;\; X\setminus D_{\delta}.
\]
So 
\[
\psi_{0} = \varphi_{\beta}+\text{constant}, \qquad {\rm in}\;\; X\setminus D_{\delta}.
\]
 Since $\delta$ is arbitrary,  this finishes the proof.

\end{proof}

To obtain more regularity control, we can substitute $\varphi_\epsilon$ in Equation  (\ref{SKE:3}) by $\psi_\epsilon.\;$ Then, the right
hand side of the equation will have a uniform $C^{1,1}$ bound (i.e. bound on its Laplacian $\Delta_{\omega_0}$) locally away from $D$.  Then we can repeat
the same procedure to obtain a new $\psi_\epsilon'$ for $\epsilon \in(0,1]$.  As before, this  new sequence  $\psi'_\epsilon$
 converges to $\varphi_\beta$ globally in $C^{\gamma'}$ and they satisfy
the same estimate as in Theorem \ref{thm2.2}. 
 Now following standard theory of Evans-Krylov \cite{Evans} \cite{Krf} and bootstrapping,   we can obtain an interior $C^{3,\gamma}$ estimate on $X\setminus D$ for some $\gamma\in( 0, 1).\;$ It follows that  $\{\psi_{\epsilon}'\}$ by sequence converges in $C^{3, \gamma'}$ to $\varphi_\beta$  locally away from $D$. For simplicity from now on we will denote by $\psi_\epsilon$ this new sequence. If we keep running this procedure, we get even higher
 derivative control away from $D$. \\

To achieve the second goal, we  need to prove the following proposition.

\begin{prop} For any $\epsilon \in (0, 1]$, the diameter of $(X, \omega_{\psi_{\epsilon}})$ is uniformly bounded above by  a constant $C_2$. 
\label{prop2.4}
\end{prop}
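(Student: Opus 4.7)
The plan is to read off the diameter bound directly from the pointwise upper estimate
\[
\omega_{\psi_\epsilon} \leq C_1 (|S|_h^2 + \epsilon)^{-(1-\beta)} \omega_0
\]
of Theorem \ref{thm2.2}. A direct Myers-type argument is not available, since the Ricci lower bound $Ric(\omega_{\psi_\epsilon}) \geq \beta\omega_{\varphi_\epsilon}$ from Proposition \ref{prop2.1} controls $Ric$ only in terms of $\omega_{\varphi_\epsilon}$, which is much smaller than $\omega_{\psi_\epsilon}$ near $D$. The key observation is that the blow-up rate $(|S|_h^2+\epsilon)^{-(1-\beta)}$ is precisely the integrable cone-type rate associated to a cone of angle $2\pi\beta$, because $\beta>0$ makes $t^{-(1-\beta)}$ integrable at the origin.

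I would fix $\delta > 0$ small, independent of $\epsilon$ and smaller than the reach of $D$ in $\omega_0$, and split $X$ into the tubular neighborhood $T_\delta = \{|S|_h < \delta\}$ and its complement. On $X \setminus T_\delta$ one has $|S|_h \geq \delta$, so $\omega_{\psi_\epsilon} \leq C_1 \delta^{-2(1-\beta)}\omega_0$ uniformly in $\epsilon$, and any two points there can be joined by an $\omega_0$-geodesic whose $\omega_{\psi_\epsilon}$-length is at most $\sqrt{C_1}\,\delta^{-(1-\beta)}\,\mathrm{diam}(X,\omega_0)$. For a point $x \in T_\delta$, I use that, since $D$ is smooth and compact, $|S|_h$ is uniformly equivalent to the $\omega_0$-distance $t$ to $D$ in a fixed neighborhood, and I join $x$ to $\partial T_\delta$ by an $\omega_0$-normal segment parametrized by arc length. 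Its $\omega_{\psi_\epsilon}$-length is bounded by
\[
\sqrt{C_1}\int_0^\delta (t^2+\epsilon)^{-(1-\beta)/2}\, dt \leq \sqrt{C_1}\int_0^\delta t^{-(1-\beta)}\, dt = \frac{\sqrt{C_1}}{\beta}\,\delta^\beta,
\]
uniformly in $\epsilon$, the integrability at $t=0$ being exactly the place where $\beta>0$ is used. Concatenating a radial segment from $p$, an outer $\omega_0$-geodesic in $X \setminus T_\delta$, and a radial segment to $q$ then yields the uniform bound
\[
C_2 = \frac{2\sqrt{C_1}}{\beta}\,\delta^\beta + \sqrt{C_1}\,\delta^{-(1-\beta)}\,\mathrm{diam}(X,\omega_0),
\]
independent of $\epsilon$.

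The main obstacle is not deep but rather the bookkeeping along $D$: justifying the comparison $|S|_h \asymp \mathrm{dist}_{\omega_0}(\cdot, D)$ and the existence of global normal segments through every point of $T_\delta$, both of which follow from smoothness and compactness of $D$ through a finite atlas of tubular charts (and from taking $\delta$ less than the normal injectivity radius). Once this is in place, the above estimate is uniform in $\epsilon$ and completes the proof.
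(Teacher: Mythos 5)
Your argument is correct and is essentially the same as the paper's: both decompose $X$ into a fixed $\delta$-tubular neighborhood of $D$ and its complement, bound the length of a radial segment to $\partial D_\delta$ by $C\delta^{\beta}$ using the integrability of $t^{-(1-\beta)}$, and bound lengths in the exterior by $C\delta^{\beta-1}\,\mathrm{diam}(X,\omega_0)$ via the uniform upper bound of Theorem \ref{thm2.2}. The constants and exponents match the paper's $c_3(\delta^\beta+\delta^{\beta-1})$, and your extra remarks on $|S|_h\asymp\mathrm{dist}_{\omega_0}(\cdot,D)$ only make explicit what the paper leaves implicit.
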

\begin{proof}  Since $D $ is smooth,  there exists a small constant $\delta > 0$ such
that the restriction of the background metric $\omega_0$ to the $\delta$-\emph{tubular} neighborhood of $D$, denoted by $D_{\delta} ,$ is  equivalent to the product metric on $D\times \mathbb B$, where $\mathbb B$ is the the standard disc of radius $\delta$. Following the estimate
in Theorem \ref{thm2.2}, for every point in $D_{\delta}$, there is a curve connecting it  to $\p D_{\delta}$ with length bounded by $ c_1 \delta^{\beta}.\;$
On the other hand, the varying metric is bounded above by the metric $c_2\delta^{\beta-1}\omega_0$ in $X\setminus D_{\delta}$.
Therefore, the diameter of $(X, \omega_{\psi_{\epsilon}})$ is controlled above by $
c_3(\delta^{\beta} + \delta^{\beta - 1}).$ 

\end{proof}

Finally, to achieve the third goal,  we need to study the Gromov-Hausdorff limit of a sequence of Riemannian manifolds with
positive Ricci curvature.

\begin{prop}   $(X, \omega_{\varphi_{\beta}})$ is the Gromov-Hausdorff limit
of $(X, \omega_{\psi_{\epsilon}})$ as $\epsilon\rightarrow 0$.\label{prop2.5}
 \end{prop}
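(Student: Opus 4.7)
The plan is to show that the identity map from $(X, \omega_{\psi_\epsilon})$ to $(X, d_{\omega_{\varphi_\beta}})$ is an $\eta(\epsilon)$-Gromov--Hausdorff approximation with $\eta(\epsilon) \to 0$ as $\epsilon \to 0$. Surjectivity is automatic, so the entire content reduces to
\[
\sup_{p,q \in X} \bigl| d_{\omega_{\psi_\epsilon}}(p,q) - d_{\omega_{\varphi_\beta}}(p,q) \bigr| \longrightarrow 0.
\]
Note that Gromov's precompactness theorem, applied using $\mathrm{Ric}(\omega_{\psi_\epsilon}) > 0$ from Proposition~\ref{prop2.1} together with the uniform diameter bound of Proposition~\ref{prop2.4}, already furnishes subsequential Gromov--Hausdorff limits; the uniform distance estimate above identifies every such limit with $(X, d_{\omega_{\varphi_\beta}})$ and thereby promotes subsequential convergence to convergence of the full family.

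Two complementary inputs drive the argument. Away from $D$: the bootstrap discussion immediately after Proposition~\ref{prop2.3} gives $\psi_\epsilon \to \varphi_\beta$ in $C^{3,\gamma}_{\rm loc}(X \setminus D)$, and hence smooth convergence of the metric tensors $\omega_{\psi_\epsilon} \to \omega_{\varphi_\beta}$ on every compact subset of $X \setminus D$. Near $D$: the uniform upper bound $\omega_{\psi_\epsilon} \leq C_1 (|S|_h^2 + \epsilon)^{-(1-\beta)} \omega_0$ from Theorem~\ref{thm2.2}, together with the integration carried out in the proof of Proposition~\ref{prop2.4}, shows that for every $\delta > 0$ every point of the tubular neighborhood $D_\delta$ can be joined to $\partial D_\delta$ by a curve of $\omega_{\psi_\epsilon}$-length at most $C\delta^\beta$, independently of $\epsilon \in (0,1]$. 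The analogous estimate for $\omega_{\varphi_\beta}$ is classical from the cone-metric asymptotics, since the radial integral $\int_0^\delta r^{\beta-1}\,dr$ is finite.

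Given $\eta > 0$, I fix $\delta$ with $C\delta^\beta < \eta/4$. For $p, q \in X$ let $\bar p, \bar q$ be nearby projections onto $\partial D_\delta$ in the $\omega_0$-metric (with the convention $\bar p = p$ if $p \notin D_\delta$). The tube estimate yields
\[
\bigl| d_{\omega_{\psi_\epsilon}}(p,q) - d_{\omega_{\psi_\epsilon}}(\bar p, \bar q) \bigr| \leq \eta/2, \qquad \bigl| d_{\omega_{\varphi_\beta}}(p,q) - d_{\omega_{\varphi_\beta}}(\bar p, \bar q) \bigr| \leq \eta/2,
\]
reducing everything to comparing distances between $\bar p, \bar q \in X \setminus D_\delta$. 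Near-minimizing paths for $\omega_{\psi_\epsilon}$ between $\bar p$ and $\bar q$ may still dip into $D_{\delta/2}$; I would truncate such paths at $\partial D_{\delta/2}$ and reroute along $\partial D_{\delta/2}$, paying another $O(\delta^\beta)$ by the same tube bound, and then invoke smooth metric convergence on the compact set $X \setminus D_{\delta/2}$ to conclude. The main obstacle is precisely the uniform-in-$\epsilon$ control of these cross-tube shortcuts, and it is exactly the $\epsilon$-independence of the Theorem~\ref{thm2.2} upper bound that makes the argument go through.
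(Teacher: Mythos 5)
Your strategy is the same as the paper's: use the identity map, control the tube $D_{\delta}$ by the radial estimate $\int_0^{\delta}(r^2+\epsilon)^{(\beta-1)/2}\,dr\leq C\delta^{\beta}$ coming from Theorem \ref{thm2.2}, use local smooth convergence on $X\setminus D_{\delta}$, and conclude by projecting points of $D_{\delta}$ to $\partial D_{\delta}$ and applying the triangle inequality. The one place where you go beyond the paper --- which simply declares the distance comparison for points outside the tube to be obvious --- is the treatment of near-minimizing paths that re-enter the tube, and it is exactly there that your argument has a gap. The claim that rerouting such an excursion along $\partial D_{\delta/2}$ costs only $O(\delta^{\beta})$ is not what Theorem \ref{thm2.2} provides: the $O(\delta^{\beta})$ figure is the cost of a purely \emph{radial} segment, whereas travel along $\partial D_{\delta/2}$ is measured in a metric bounded above only by $C_1\delta^{2\beta-2}\omega_0$ in \emph{all} directions (there is no improved tangential bound for $\omega_{\psi_{\epsilon}}$, in contrast to the limit metric, whose tangential part is uniformly equivalent to $\omega_0$ by Definition 1.3). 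So an excursion whose entry and exit points $a,b\in\partial D_{\delta/2}$ are at $\omega_0$-distance $s$ --- which can be macroscopic if the path travels parallel to $D$ inside the tube, and which costs the original path only about $C_1^{-1/2}s$ by the lower bound $\omega_{\psi_{\epsilon}}\geq C_1^{-1}\omega_0$ --- is rerouted at cost of order $\delta^{\beta-1}s\gg s$. Moreover the number of excursions is not a priori bounded, so even the genuinely $O(\delta^{\beta})$ contributions (transversal crossings) do not obviously sum to something small.

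To close this you need an additional idea: either show that near-minimizing $\omega_{\psi_{\epsilon}}$-paths cannot gain a definite amount of length by travelling tangentially inside a thin tube (for instance by comparing with the limit cone metric for one direction of the inequality and exploiting the current/volume convergence or a tangential upper bound for the other), or argue at the level of distances between entry and exit points rather than lengths of rerouted curves. To be fair, the paper buries exactly this difficulty in the phrase ``obviously holds with $\delta/3$''; your write-up has the virtue of naming the obstacle, but the quantitative fix you propose does not yet resolve it.
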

\begin{proof} By Theorem \ref{thm2.2} and Proposition \ref{prop2.4},  we have the following:
\begin{enumerate}

\item $ \omega_{\psi_{\epsilon}}$ converges as a current to $\omega_{\varphi_{\beta}};$
\item $(X\setminus D, \omega_{\psi_{\epsilon}})$ converges locally in $C^{3, \gamma'}$ to $(X\setminus D, \omega_{\varphi_{\beta}});\;$
\item Any fixed $\delta$-tubular neighborhood $D_\delta$ of $D$ with respect
to the background metric $\omega_0$ is contained in a $\eta(\delta)$-tubular neighborhood of $D$ with respect to the varying
metric $\omega_{\psi_{\epsilon}}$, where $\eta(\delta)$ tends to zero as $\delta\;$ tends to zero. \end{enumerate}

To prove the desired Gromov-Hausdorff convergence, we use the identity map from $X$ to itself, for any $\delta>0$ small, we want to show
that if $\epsilon$ is small enough then
\begin{equation}
 |d_{\psi_\epsilon}(x, y) - d_{\varphi_\beta}(x,y) | < \delta, \qquad \forall x, y \in X.
 \label{eq:GHdistance}
\end{equation}
Fix a small constant 
\[
\delta_1\ll \delta^{1\over \beta}.
\]
For any two points outside $D_{\delta_1}$, for $\epsilon>0$ sufficiently small, the preceding inequality (\ref{eq:GHdistance}) obviously holds with ${\delta \over 3}$ in the right hand side. Now for any two points $x,y \in D_{\delta_1},\;$
there exist two points $x_1, y_1 \in \p D_{\delta_1}$ such that
\[
d_{\varphi_\beta} (x_1, x)  + d_{\varphi_\beta} (y_1, y)  < {\delta \over 3} \qquad {\rm and}\; \;\;d_{\psi_\epsilon} (x_1, x)  + d_{\psi_\epsilon} (y_1, y)  < {\delta \over 3}.\]
A simple triangle inequality implies that
\[
|d_{\psi_\epsilon}(x, y) - d_{\varphi_\beta}(x,y) | <  {\delta \over 3} +  {\delta \over 3} +  {\delta \over 3} = \delta.
\]
\end{proof}

To finish the proof of Theorem \ref{thm1}, what is left to prove is the uniform diameter bound (depending only on $\beta_0$). To see this, one notices that for a K\"ahler metric with cone singularities along $D$, the complement $X\setminus D$ is geodesically convex, so we can apply Myers' theorem to see that the diameter of $\omega_\beta$ is uniformly bounded by $\pi \sqrt{\frac{n-1}{\beta}}\leq \pi \sqrt{\frac{n-1}{\beta_0}}$. Then for each $\beta \geq \beta_0$, we  can apply  Proposition \ref{prop2.5} (choosing $\epsilon$ sufficiently small)  to obtain a sequence of smooth K\"ahler metrics of positive Ricci curvature with  Gromov-Hausdorff limit  $(X, \omega_{\varphi_\beta})$  and the diameter of this sequence of metrics is uniformly bounded above by $2\pi \sqrt{\frac{n-1}{\beta_0}}$.

\section{Proof of Theorem \ref{thm2}}
To prove Theorem \ref{thm2}, we need to achieve the fourth goal: to approximate the K\"ahler-Einstein metric $\omega_{\varphi_\beta}$ by smooth K\"ahler metrics
with Ricci curvature bounded from below by some uniform positive number.  From the complex Monge-Amp\`ere theory, this is very different from the first and second goal  where we can obtain $C^{0}$ estimate via Kolodziej's theorem \cite{Kolo98} directly. What
we shall do is to use the metric constructed in the previous section as a starting point, and use continuity
method to solve the twisted K\"ahler-Einstein equation up to $t = 1-\beta.\;$ To do this, we need to obtain a uniform $C^0$ estimate. The key observation is that for the family of $(1,1)$ forms $\chi_\epsilon$ (see Equation (\ref{positiveform1})) which converges to $2\pi (1-\beta) [D],\;$ the twisted K-energy
$E_{\epsilon,(1-\beta) D}$ dominates the K-energy  $E_{(1-\beta) D}$ from above (See
formulas (\ref{twisted energy 1}), (\ref{twisted energy 2}) and Lemma \ref{lem3.5} below for precise statements).\\

Following Szekelyhidi \cite{G}, we define
 \[
R(X) = \displaystyle \sup \{t: \exists \  \omega'  \in 2\pi c_1(X)\ {\rm such\;\; that }\; Ric(\omega') > t \omega'\}.
\]

Theorem \ref{thm2} is a consequence of the following

\begin{theo}  If there is a K\"ahler-Einstein metric $\omega_{\varphi_{\beta}}$ with cone angle $ 2\pi \beta\;$ along $D$,  then
 $(X, \omega_{\varphi_{\beta}})$ is the Gromov-Hausdorff limit of a sequence of K\"ahler metrics with
Ricci curvature bounded below by $\beta>0.\;$ In particular, $R(X)\geq\beta$.
\label{thm3.1} 
\end{theo}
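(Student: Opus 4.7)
At each fixed $\epsilon\in(0,1]$, I would run the continuity method
\[
\omega_{\psi_{s,\epsilon}}^n \;=\; e^{-s\beta\psi_{s,\epsilon} - (1-s)\beta\varphi_\epsilon + h_{\omega_0}}\frac{\omega_0^n}{(|S|_h^2 + \epsilon)^{1-\beta}},\qquad s\in[0,1],
\]
whose $s=0$ version is precisely Equation~(\ref{SKE:3}), with solution $\psi_\epsilon$ from Section~2, and whose $s=1$ equation couples $\psi$ to itself through the exponential. Exactly as in Proposition~\ref{prop2.1}, a direct computation gives
\[
Ric(\omega_{\psi_{s,\epsilon}}) \;=\; s\beta\omega_{\psi_{s,\epsilon}} + (1-s)\beta\omega_{\varphi_\epsilon} + (1-\beta)\chi_\epsilon \;\geq\; s\beta\,\omega_{\psi_{s,\epsilon}},
\]
so a solution at $s=1$ produces a smooth Kähler metric with $Ric(\omega_{\psi_{1,\epsilon}})\geq\beta\omega_{\psi_{1,\epsilon}}$ on $X$, which is precisely the lower Ricci bound demanded by the theorem.

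Openness at $s_0\in[0,1]$ is standard: the linearised operator is $\Delta_{\omega_{s_0,\epsilon}} + s_0\beta$, invertible on functions of zero mean for $s_0>0$ by Lichnerowicz together with the Ricci lower bound just displayed, and invertible by elementary Hodge theory at $s_0=0$. Closedness requires uniform estimates on $\psi_{s,\epsilon}$. Given a $C^0$ bound, the $C^2$ estimate follows from the Chern--Lu inequality and the pointwise upper bound on the volume form exactly as in Theorem~\ref{thm2.2}; Evans--Krylov together with a bootstrap then deliver higher regularity on $X\setminus D$, reproducing the estimates of Section~2. Thus the whole argument reduces to a uniform $L^\infty$ bound on $\psi_{s,\epsilon}$ that is uniform in both $s\in[0,1]$ and $\epsilon\in(0,1]$, and this is where I expect the main obstacle to lie.

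For that $C^0$ estimate I would follow the K-energy strategy foreshadowed in the section introduction. The $s=1$ equation is the Euler--Lagrange equation of the twisted Mabuchi K-energy $E_{\epsilon,(1-\beta)\chi_\epsilon}$, which is monotone along the continuity path (standard differentiation), so bounding it uniformly reduces to bounding its value at $s=0$; the latter is controlled by the $C^\gamma$ estimates of Section~2. Because $\chi_\epsilon\to 2\pi[D]$ as a positive approximation, one establishes the comparison
\[
E_{\epsilon,(1-\beta)\chi_\epsilon}(\psi) \;\geq\; E_{(1-\beta)[D]}(\psi),
\]
as flagged in the excerpt; and since $\varphi_\beta$ is a minimiser of the right-hand side (being a KE metric with cone singularities), $E_{(1-\beta)[D]}$ is bounded below. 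Combining monotonicity, this comparison, and a Tian--Mabuchi properness argument (or an $\alpha$-invariant computation exploiting the positivity of $\chi_\epsilon$) converts the energy bound into the sought $L^\infty$ bound on $\psi_{s,\epsilon}$. Finally, letting $\epsilon\to 0$ along the $s=1$ solutions and extracting a subsequence, the uniqueness argument of Proposition~\ref{prop2.3} identifies the limit with $\varphi_\beta$, and Gromov--Hausdorff convergence is deduced as in Proposition~\ref{prop2.5}, with the lower bound $Ric\geq\beta\omega$ now providing the diameter control automatically via Myers.
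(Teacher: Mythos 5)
Your continuity path is the paper's Equation (\ref{SKE:4}) up to the reparametrization $t=s\beta$, and your overall architecture --- the Ricci identity, monotonicity of the twisted K-energy along the path, comparison of $E_{\epsilon,(1-\beta)D}$ with $E_{(1-\beta)D}$, reduction of closedness to a uniform $C^0$ bound, and the limit $\epsilon\to 0$ followed by Proposition \ref{prop2.5} --- is exactly the paper's. However, the two steps you pass over most quickly are precisely where the real content lies, and as written each contains a genuine gap. The first concerns the $C^0$ estimate. A lower bound for $E_{(1-\beta)D}$ (even granting that $\varphi_\beta$ minimizes it, which in the conical setting already requires the Ding/K-energy comparison of \cite{Li08}, \cite{B} together with \cite{Berndt11}) does not convert an upper bound on the energy into an $L^\infty$ bound on the potential; for that you need \emph{properness}, i.e.\ coercivity with respect to $J_0$, which is Proposition \ref{prop3.6} and is the crux of the whole section. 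There is no off-the-shelf ``Tian--Mabuchi properness argument'' deducing coercivity from the mere existence of the conical KE metric; the paper instead combines \cite{SW12} (no nontrivial holomorphic field tangent to $D$), the openness theorem of \cite{Dona11} to produce a conical KE metric at a slightly larger angle $\beta'>\beta$, lower-boundedness of the twisted functionals at $\beta'$, Berman's properness at some small angle $\beta''$, and linearity of the twisted K-energy in the cone angle to interpolate and get properness at $\beta$ itself. An $\alpha$-invariant computation only yields properness for small cone angle, not at the given $\beta$.

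The second gap is the identification of the $\epsilon\to 0$ limit at $s=1$ with $\varphi_\beta$. The limiting equation there is $\omega_{\phi_0}^n=e^{-\beta(\phi_0-\varphi_\beta)}\,\omega_{\varphi_\beta}^n$, in which the unknown enters the exponential with a \emph{negative} sign. The integration-by-parts device of Proposition \ref{prop2.3} works because there the two volume forms being compared are prescribed data whose difference tends to zero in $L^p$; here the analogous term $\int_X(\phi_0-\varphi_\beta)\bigl(e^{-\beta\phi_0}-e^{-\beta\varphi_\beta}\bigr)(\cdots)$ is nonpositive, i.e.\ it has the \emph{same} sign as the gradient term produced by integration by parts, so the argument does not close. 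This is exactly the positive-curvature uniqueness problem for K\"ahler--Einstein metrics, and the paper resolves it by invoking Berndtsson's uniqueness theorem \cite{Berndt11} (applicable because of \cite{SW12}), or alternatively by an implicit-function-theorem argument along the path using \cite{Dona11}. Without one of these inputs your limit could a priori be a different conical KE metric, and the Gromov--Hausdorff conclusion would not follow.
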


This verifies one aspect of a conjecture by the second named author earlier \cite{Dona11}.  
We need to do some preparation first.  
Set
\[
\chi = \sqrt{-1}\partial \b \partial \log |S|^{2}_{h} + \omega_0.
\]
By the Poincar\'e-Lelong equation, this is the same as the current $2\pi [D]$.  For  $\epsilon>0$ sufficiently small,  we define (c.f. Equation (\ref{positiveform1})):
\[
\chi_{\epsilon} =\sqrt{-1} \partial \b \partial \log (|S|^{2}_{h} +\epsilon)  + \omega_0 > 0
\]
Clearly as currents
\[
\chi_{\epsilon} \rightarrow \chi.
\] 

For any $\varphi\in \cal H$, we choose a smooth family of potentials $\varphi(t)  (t\in [0, 1])$ in $\cal H$ with $\varphi(0)=0$ and $\varphi(1)=\varphi$. We denote $\omega_t=\omega_0+\sqrt{-1}\partial \bar{\partial}\varphi(t)$.  For any smooth function $f(t, \cdot)$, we write $\dot {f}(t) $ for the time derivative ${{\p f} \over {\p t}}(t, \cdot)$.
\begin{defi} Define a functional $J_{\chi_{\epsilon}}$ by  
\[
 J_{\chi_{\epsilon}}(\varphi)=  n\int_0^1 dt \int_{X} \; \dot{\varphi}(t) ( \chi_{\epsilon} - \omega_{t}) \wedge \omega_{t}^{n-1}, 
\]\label{def3.2}
and as $\epsilon$ tends to zero,  we define a functional $J_{\chi}$ by 
\[
 J_{\chi}(\varphi) =2\pi n \int_0^1dt \int_D \dot{\varphi}(t) \omega_{t}^{n-1}- n\int_0^1dt \int_{X} \;\dot{\varphi}(t)  \omega_{t}^{n}, 
\]
\end{defi}
\begin{defi}  Define the K-energy functional $E$ by
\[
E(\varphi)  = - n\int_0^1dt \int_{X}\; \dot{\varphi}(t)  (Ric(\omega_t) - \omega_{t}) \wedge \omega_{t}^{n-1}.
\]\label{def3.3}
\end{defi}
 One can check these  do not depend on the choice of the path and hence are well-defined functionals on $\cal H$. Of course  they are only  defined up to an additive constant. We have fixed this constant by imposing that they have value $0$ at $\varphi=0$.
Then,  we  define the twisted K-energy 
\begin{equation}
E_{\epsilon,(1-\beta) D}(\varphi) = 
E(\varphi) + (1-\beta) J_{\chi_{\epsilon}}(\varphi), \label{twisted energy 1}
\end{equation}
and
\begin{equation}
E_{(1-\beta) D}(\varphi) = 
E(\varphi) + (1-\beta) J_{\chi}(\varphi).  \label{twisted energy 2}
\end{equation}
First, we  give an explicit formula for $J_{\chi_{\epsilon}}\;$ (c.f.  \cite{LiSun} ).
\begin{prop} We have
\begin{equation} J_{\chi_{\epsilon}}(\varphi)  = \int_{X}\;  \log (|S|^{2}_{h} + \epsilon)\cdot (\omega_{\varphi}^{n} -\omega_0^{n}) + n  \int_{0}^{1} d\, t \int_{X}\; {\dot{\varphi}(t)}( \omega_0-\omega_{t})\wedge \omega_{t}^{n-1}.   \label{twisted energy 3} \end{equation}
\label{prop3.4}
\end{prop}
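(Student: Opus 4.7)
\bigskip

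\noindent\textbf{Proof plan for Proposition \ref{prop3.4}.}

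The plan is to split the integrand of the defining formula for $J_{\chi_\epsilon}$ using the reference form $\omega_0$, and then reduce the only nontrivial piece to a total time derivative via integration by parts. More precisely, I would write
\[
\chi_\epsilon - \omega_t \;=\; (\chi_\epsilon - \omega_0) + (\omega_0 - \omega_t),
\]
so that, from Definition \ref{def3.2},
\[
J_{\chi_\epsilon}(\varphi) \;=\; n\int_0^1 dt \int_X \dot{\varphi}(t)(\chi_\epsilon - \omega_0)\wedge \omega_t^{n-1}
\;+\; n\int_0^1 dt \int_X \dot{\varphi}(t)(\omega_0-\omega_t)\wedge \omega_t^{n-1}.
\]
The second summand is already the second term in the claimed formula \eqref{twisted energy 3}, so the entire content of the proposition is to identify the first summand with $\int_X \log(|S|^2_h+\epsilon)\cdot(\omega_\varphi^n-\omega_0^n)$.

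For that first summand, I would use $\chi_\epsilon - \omega_0 = \sqrt{-1}\partial\bar\partial \log(|S|^2_h+\epsilon)$, which, crucially for $\epsilon>0$, is a \emph{smooth} $(1,1)$-form on $X$ (since $|S|^2_h+\epsilon \geq \epsilon>0$). This smoothness is what legitimizes the next step. I would integrate by parts on $X$ (no boundary, no current contributions, since $\log(|S|^2_h+\epsilon)$ is globally smooth) to move the $\sqrt{-1}\partial\bar\partial$ from $\log(|S|^2_h+\epsilon)$ onto $\dot{\varphi}(t)$:
\[
\int_X \dot{\varphi}(t)\,\sqrt{-1}\partial\bar\partial\log(|S|^2_h+\epsilon)\wedge \omega_t^{n-1}
\;=\; \int_X \log(|S|^2_h+\epsilon)\,\sqrt{-1}\partial\bar\partial\dot{\varphi}(t)\wedge \omega_t^{n-1}.
\]
Then I would observe that $\sqrt{-1}\partial\bar\partial \dot\varphi(t) = \tfrac{d}{dt}\omega_t$, and since $\omega_t$ is a closed $(1,1)$-form,
\[
\sqrt{-1}\partial\bar\partial\dot{\varphi}(t)\wedge \omega_t^{n-1} \;=\; \tfrac{1}{n}\,\tfrac{d}{dt}\omega_t^n.
\]

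Substituting and exchanging the order of integration (justified by smoothness in $t$ and in $x$ of all integrands, since $\epsilon>0$), the first summand becomes
\[
n\int_0^1 dt \int_X \log(|S|^2_h+\epsilon)\cdot \tfrac{1}{n}\tfrac{d}{dt}\omega_t^n
\;=\; \int_X \log(|S|^2_h+\epsilon)\int_0^1 \tfrac{d}{dt}\omega_t^n\,dt
\;=\; \int_X \log(|S|^2_h+\epsilon)\,(\omega_\varphi^n-\omega_0^n),
\]
using $\omega_0|_{t=0}=\omega_0$ and $\omega_t|_{t=1}=\omega_\varphi$. Combining the two pieces yields precisely \eqref{twisted energy 3}.

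There is essentially no obstacle here: the only subtlety is making sure the integration by parts is unambiguous, which is guaranteed by the strict positivity of $|S|^2_h+\epsilon$ (so one does not need to invoke the Poincar\'e--Lelong formula or handle a current contribution from $D$). The argument breaks down exactly in the limit $\epsilon\to 0$, where $\log|S|^2_h$ develops a $-\infty$ singularity along $D$ and the correction term $2\pi(1-\beta)\int_D\,\cdot$ appearing in $J_\chi$ arises from Poincar\'e--Lelong; that, however, is not part of this proposition and will be addressed separately.
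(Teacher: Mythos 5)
Your proposal is correct and follows essentially the same route as the paper's proof: split off the $(\omega_0-\omega_t)$ piece, write $\chi_\epsilon-\omega_0=\sqrt{-1}\partial\bar\partial\log(|S|^2_h+\epsilon)$, integrate by parts onto $\dot\varphi(t)$, and recognize $n\sqrt{-1}\partial\bar\partial\dot\varphi(t)\wedge\omega_t^{n-1}=\frac{\partial}{\partial t}\omega_t^n$ so the $t$-integral telescopes. Your explicit remark that smoothness of $\log(|S|^2_h+\epsilon)$ for $\epsilon>0$ is what justifies the integration by parts is a point the paper leaves implicit, but the argument is the same.
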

\begin{proof} 
This is  a direct calculation:
\[
\begin{array}{lcl} 
&& J_{\chi_{\epsilon}}(\varphi)\\ 
& = & n\int_{0}^{1} dt \int_{X}{\dot{\varphi}(t)} \chi_{\epsilon}\wedge \omega_{t}^{n-1}-n \int_{0}^{1} d\, t \int_{X}\; {\dot{\varphi}(t)} \ \omega_{t}^{n}\\
& = & n \int_{0}^{1} d\, t \int_{X}\; {\dot{\varphi}(t)}\sqrt{-1}\partial \bar \partial \log (|S|^{2}_{h} + \epsilon)\wedge \omega_{t}^{n-1} +   n\int_{0}^{1} d\, t \int_{X}\; {\dot{\varphi}(t)} (\omega_0-\omega_t)\wedge \omega_{t}^{n-1} 
\\
& = & n \int_{0}^{1} d\, t \int_{X}\;  \log (|S|^{2}_{h} + \epsilon)\cdot \triangle_{\omega_t} {\dot{\varphi}(t)} \omega_{t}^{n} +   n\int_{0}^{1} d\, t \int_{X}\; {\dot{\varphi}(t)}(\omega_0-\omega_t)\wedge \omega_{t}^{n-1} \\
& = & \int_{0}^{1} d\, t \int_{X}\;  \log (|S|^{2}_{h} + \epsilon)\cdot   {\partial \over {\partial t}}  \omega_{t}^{n} +   n\int_{0}^{1} d\, t \int_{X}\; {\dot{\varphi}(t)}(\omega_0-\omega_t)\wedge \omega_{t}^{n-1} \\
& = & \int_{X}\;  \log (|S|^{2}_{h} + \epsilon)\cdot (\omega_{\varphi}^{n} -\omega_0^{n}) + n  \int_{0}^{1} d\, t \int_{X}\; {\dot{\varphi}(t)} ( \omega_0-\omega_t)\wedge \omega_{t}^{n-1}.
\end{array}\]
\end{proof}
Similarly, we have
\[
J_{\chi}(\varphi) =  \int_{X}\;  \log |S|^{2}_{h} \cdot (\omega_{\varphi}^{n} -\omega_0^{n}) + n  \int_{0}^{1} d\, t \int_{X}\; {\dot{\varphi}(t)}  (\omega_0-\omega_t)\wedge \omega_{t}^{n-1}.
\]
Now we have the following observation
\begin{lem} There exists a uniform constant $C_3=C_3(X, D, \omega_0)$ such that for any $\epsilon \in (0,1]$ and for any smooth K\"ahler potential $\varphi$,
we have
\[
J_{\chi_{\epsilon}} (\varphi) \geq J_{\chi} (\varphi) - C_3.
\]
As a consequence, we also have
\[
E_{\epsilon, (1-\beta) D}(\varphi) \geq E_{(1-\beta)D}(\varphi) - C_3.
\]
\label{lem3.5}
\end{lem}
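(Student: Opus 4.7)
The plan is to use the explicit formula for $J_{\chi_\epsilon}$ from Proposition \ref{prop3.4} together with the analogous formula for $J_\chi$ stated just after it, so that the second term (the one involving $\dot\varphi(t)(\omega_0-\omega_t)\wedge\omega_t^{n-1}$) cancels completely in the difference $J_{\chi_\epsilon}(\varphi)-J_\chi(\varphi)$. What remains is the single integral
\[
J_{\chi_\epsilon}(\varphi) - J_\chi(\varphi) = \int_X F_\epsilon\,\bigl(\omega_\varphi^n-\omega_0^n\bigr), \qquad F_\epsilon := \log\!\left(1+\frac{\epsilon}{|S|_h^2}\right).
\]
So the whole lemma is reduced to estimating this one expression, independently of $\varphi$.

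The key point is that $F_\epsilon \geq 0$ pointwise on $X\setminus D$, because $\epsilon/|S|_h^2 > 0$. Therefore $\int_X F_\epsilon\,\omega_\varphi^n \geq 0$, and we get
\[
J_{\chi_\epsilon}(\varphi) - J_\chi(\varphi) \geq -\int_X F_\epsilon\,\omega_0^n.
\]
So it suffices to produce a uniform bound on $\int_X F_\epsilon\,\omega_0^n$ over $\epsilon\in(0,1]$, and this no longer involves $\varphi$ at all.

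To bound this integral uniformly, I would split $X$ into the region $\{|S|_h^2\geq\epsilon\}$, where $F_\epsilon\leq\log 2$, and the region $\{|S|_h^2<\epsilon\}$, where $F_\epsilon\leq\log(2\epsilon/|S|_h^2) \leq \log 2 - \log|S|_h^2$ for $\epsilon\leq 1$. Combining the two cases, one has the $\epsilon$-independent pointwise bound
\[
0 \leq F_\epsilon \leq \log 2 + \bigl|\log|S|_h^2\bigr|.
\]
Since $D$ is a smooth divisor, $|S|_h^2$ vanishes to order two along $D$, so $\log|S|_h^2 \in L^1(X,\omega_0^n)$ by a local computation in coordinates adapted to $D$. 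Hence $\int_X F_\epsilon\,\omega_0^n$ is bounded by a constant $C_3$ depending only on $X$, $D$, $h$ and $\omega_0$, proving the first inequality.

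The second inequality is then immediate: since $1-\beta \in [0,1)$, multiplying by $1-\beta$ preserves the direction of the estimate, and the definitions (\ref{twisted energy 1}) and (\ref{twisted energy 2}) give
\[
E_{\epsilon,(1-\beta)D}(\varphi) - E_{(1-\beta)D}(\varphi) = (1-\beta)\bigl(J_{\chi_\epsilon}(\varphi) - J_\chi(\varphi)\bigr) \geq -(1-\beta) C_3 \geq -C_3,
\]
where $C_3$ is enlarged if necessary. I do not expect any genuine obstacle here; the only substantive input beyond the algebraic cancellation is the standard integrability of $\log|S|_h^2$ against a smooth volume form, which follows from the smoothness of $D$.
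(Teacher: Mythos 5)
Your proposal is correct and follows essentially the same route as the paper: both reduce the difference $J_{\chi_\epsilon}(\varphi)-J_\chi(\varphi)$ to $\int_X \log\bigl(\tfrac{|S|_h^2+\epsilon}{|S|_h^2}\bigr)(\omega_\varphi^n-\omega_0^n)$ via the explicit formula of Proposition \ref{prop3.4}, discard the nonnegative $\omega_\varphi^n$ term, and bound the $\omega_0^n$ term uniformly using the local integrability of $\log|S|_h^2$. You merely make explicit the uniform integral bound that the paper leaves as an elementary calculation.
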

\begin{proof}  This follows from an elementary calculation:

\[
\begin{array}{lcl} J_{\chi_{\epsilon}}(\varphi)  & = & \int_{X}  \log (|S|^{2}_{h} + \epsilon)\cdot (\omega_{\varphi}^{n} -\omega_0^{n}) +   n\int_{0}^{1} d\, t \int_{X}\; {\dot{\varphi}(t)} (\omega_0-\omega_t) \wedge \omega_{t}^{n-1} \\
& =  &  \int_{X}\;  \log (|S|^{2}_{h} + \epsilon)\cdot \omega_{\varphi}^{n} -  \int_{X}\;  \log (|S|^{2}_{h} + \epsilon)\cdot\omega_0^{n} +  n\int_{0}^{1} d\, t \int_{X}\; {\dot{\varphi}(t)} (\omega_0-\omega_t)\wedge \omega_{t}^{n-1} \\
& \geq & \int_{X}\;  \log |S|^{2}_{h} \cdot \omega_{\varphi}^{n}  -  \int_{X}\;  \log |S|^{2}_{h}\cdot\omega_0^{n}   -  \int_{X}  {\log \frac{|S|^2_h+\epsilon}{|S|^2_h}}\cdot\omega_0^{n} +n\int_{0}^{1} d\, t \int_{X}\; {\dot{\varphi}(t)} (\omega_0-\omega_t)\wedge \omega_{t}^{n-1} \\
& \geq & \int_{X}\;  \log |S|^{2}_{h} \cdot (\omega_{\varphi}^{n} -\omega_0^{n}) +  n\int_{0}^{1} d\, t \int_{X}\;{\dot{\varphi}(t)} (\omega_0-\omega_t)\wedge \omega_{t}^{n-1}  -C_3\\
 &= & J_{\chi} (\varphi) - C_3.
\end{array}
\]
\end{proof}
It is well-known that the K-energy has an explicit expression (c.f. \cite{Chen}): 
\[
E(\varphi)=\int_X \log \frac{\omega_\varphi^n}{\omega_0^n}\omega_0^n+I(\varphi)+Q(\varphi),
\]
where 
\[
I(\varphi)= n\int_0^1dt \int_X {\dot{\varphi}(t)} \omega_t^n,
\]
and 
\[
Q(\varphi)=-n\int_0^1dt \int_X{\dot{\varphi}(t)} Ric(\omega_0)\wedge \omega_t^{n-1}.
\]

Following  \cite{G} in the smooth case and  \cite{LiSun} in  the case with cone singularities, we have

\begin{prop} If there exists a K\"ahler-Einstein metric $\omega_{\varphi_{\beta}}$ with cone angle $2\pi \beta > 0$
along $D, $ then the twisted K-energy $E_{(1-\beta)D}$ is proper on $\cal H$. In other words, there are constants $C_4, C_5$ depending on $X, D, \omega_0$ and $\beta$ such that for any smooth K\"ahler potential $\varphi$, we have
\[
E_{(1-\beta)D}(\varphi) \geq C_{4}\cdot J_{0}(\varphi) - C_{5}, 
\]
where 
\[
J_{0}(\varphi)=\int_X \varphi (\omega_0^n-\omega_\varphi^n). 
\]
\label{prop3.6}
\end{prop}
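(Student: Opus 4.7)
I would adapt Sz\'ekelyhidi's argument \cite{G} from the smooth setting to the conic case, following \cite{LiSun}. The hypothesis provides a critical point $\omega_{\varphi_\beta}$ of $E_{(1-\beta)D}$, and the task is to upgrade this existence to coercivity against $J_0$.

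The core step is a continuity method at the regularized level. For $\epsilon > 0$ small, consider the family of smooth equations
\[
  Ric(\omega_s^\epsilon) = s\beta\,\omega_s^\epsilon + (1-s)\beta\,\omega_0 + (1-\beta)\chi_\epsilon, \qquad s \in [0,1],
\]
interpolating between a Calabi--Yau equation at $s=0$ (solvable by Yau's theorem, since the right hand side is a smooth positive form in $2\pi c_1(X)$) and a smoothly-twisted K\"ahler--Einstein equation at $s=1$. Openness along the path is by the inverse function theorem, using $Ric(\omega_s^\epsilon) \geq s\beta\,\omega_s^\epsilon$; closedness reduces to a uniform $C^0$ estimate, which I would establish by combining Kolodziej's theorem with the Chern--Lu argument already used in Theorem \ref{thm2.2}, anchored at the limiting corner $(s,\epsilon) = (1,0)$ by comparison with the cone KE $\omega_{\varphi_\beta}$ through the smooth approximations $\omega_{\psi_\epsilon}$ of Section \ref{sec 2}. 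Once the continuity method is solved up to $s = 1$, the smooth solution $\omega_s^\epsilon$ for $s$ slightly less than $1$ is a minimizer of a modification of the twisted K-energy by $s\beta\,J_0$; an integration-by-parts/Moser--Trudinger argument in the style of \cite{G} then yields
\[
  E_{\epsilon,(1-\beta)D}(\varphi) \geq C_4\,J_0(\varphi) - C_5^\epsilon
\]
for all $\varphi \in \mathcal H$, with $C_4 = 1 - s\beta > 0$. Tracking the dependence of $C_5^\epsilon$ on the geometric data (the diameter bound of Proposition \ref{prop2.4}, the metric equivalence of Theorem \ref{thm2.2}, and a uniform conic $\alpha$-invariant extracted from the existence of $\omega_{\varphi_\beta}$) shows it can be taken uniform in $\epsilon$; then passing $\epsilon \to 0$ and invoking Lemma \ref{lem3.5} to handle the log-factors gives the proposition.

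The main obstacle is the uniform-in-$\epsilon$ $C^0$ estimate along the continuity path as $s \to 1$; without it, one cannot close the continuity method and the Moser--Trudinger step is unavailable. The hypothesis enters precisely here, since $\omega_{\varphi_\beta}$ anchors the $(s,\epsilon) = (1,0)$ corner of the continuity family, and the smooth approximations of Section \ref{sec 2} form the quantitative bridge transporting $C^0$ control from this singular endpoint to the smoothly-twisted solutions $\omega_s^\epsilon$ at $\epsilon > 0$. Making this bridge uniform---equivalently, producing a lower bound on the conic $\alpha$-invariant of the pair $(X,(1-\beta)D)$ that survives the regularization---is the principal technical content of the argument.
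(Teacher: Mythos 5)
Your plan and the paper's proof diverge completely, and your version has a genuine gap at its central step. The paper does not run a continuity method here at all: it first invokes the absence of nontrivial holomorphic vector fields tangential to $D$ \cite{SW12} together with the openness theorem of \cite{Dona11} to produce a cone K\"ahler--Einstein metric at a strictly larger angle $2\pi\beta'>2\pi\beta$; it then cites \cite{Berndt11}, \cite{B} and \cite{Li08} to conclude that $E_{(1-\beta')D}$ is bounded below on $\cal H$, cites \cite{B} again for properness of $E_{(1-\beta'')D}$ at sufficiently small $\beta''$, and finally uses linearity of the twisted K-energy in $\beta$ \cite{LiSun} to interpolate: properness at $\beta''$ plus boundedness below at $\beta'>\beta$ yields properness at the intermediate value $\beta$ with a definite coefficient $C_4>0$. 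This interpolation is the actual content of Sz\'ekelyhidi's trick in \cite{G}, which you invoke by name but do not in fact use.

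The gap in your route is the following. You propose to close a continuity method up to $s=1$ and you yourself identify the obstacle as a uniform $C^0$ estimate as $s\to 1$. But in this setting the uniform $C^0$ estimate along such a path is precisely what properness of the twisted K-energy is needed to deliver --- this is exactly how the paper later proves Theorem \ref{thm3.1}, where Proposition \ref{prop3.6} is the essential input. The cone metric $\omega_{\varphi_\beta}$ anchoring the corner $(s,\epsilon)=(1,0)$ does not propagate $C^0$ control to interior points of the path: there is no a priori comparison between solutions at different $s$ without an energy bound, so your argument is circular as stated. In addition, the step asserting that a minimizer of the twisted K-energy yields a coercivity inequality with $C_4>0$ \emph{via} an ``integration-by-parts/Moser--Trudinger argument'' is itself a deep theorem (in the smooth case it is Tian's existence-implies-properness theorem), not a routine computation. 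Finally, you never use the openness input producing a solution at some $\beta'$ strictly larger than $\beta$; without it one can at best hope for boundedness below of $E_{(1-\beta)D}$, never for the strict coercivity against $J_0$ that the proposition asserts.
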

\begin{proof} By \cite{SW12} there is no non-trivial holomorphic vector field on $X$ that is tangential to $D$. So by the openness theorem \cite{Dona11} for a slightly larger $\beta' > \beta$ there exists a K\"ahler-Einstein metric on $X$ with cone angle $2\pi\beta'$ along $D$. By  \cite{Berndt11} and \cite{B}, the twisted Ding functional is bounded from below on $\cal H$. By \cite{Li08}, the infimum of the Ding functional and the infimum of the K-energy are the same in the anti-canonical class. This is generalized in \cite{B} to the case with cone singularities. It follows that the
twisted K-energy $E_{(1-\beta')D}$ is bounded from below on $\cal H$. On the other hand, it is proved in \cite{B} that for $\beta''>0$ sufficiently small $E_{(1-\beta'')D}$ is proper on $\cal H$.  Since the twisted K-energy is linear in $\beta$ \cite{LiSun}, we see that $E_{(1-\beta)D}$ is proper on $\cal H$. 
\end{proof}

To prove Theorem \ref{thm3.1}, we need to set up a continuity path. 
For any $\epsilon>0$, we consider the following equation for $ \phi_\epsilon(t)\in {\cal H} (t\in [0, \beta])$:
\begin{equation} \label{eqn3.4}
Ric (\omega_{\phi_{\epsilon}(t)}) = t \omega_{\phi_{\epsilon}(t)} + (\beta-t) \omega_{\varphi_{\epsilon}} + (1-\beta) \chi_{\epsilon}. 
\end{equation}
This is equivalent to the complex Monge-Amp\`ere equation:
\begin{equation}\label{SKE:4}
\left\{
                                                               \begin{array}{ll}
\omega_{\phi_{\epsilon}(t, \cdot)}^{n} = e^{- t\phi_{\epsilon}(t, \cdot) -(\beta - t) \varphi_{\epsilon} + h_{\omega_0}} {1 \over {(|S|^{2}_{h} +\epsilon)^{1-\beta}}} \omega_0^{n},   & \\
          \phi_{\epsilon}(0, \cdot) =   \psi_{\epsilon}.  &\\

                                                               \end{array}
                                                             \right.
\end{equation}
Here $\psi_{\epsilon}$ is the solution in Equation (\ref{SKE:3}); in other words,  we have
\[
\omega_{\psi_{\epsilon}}^{n} = e^{ -\beta \varphi_{\epsilon} + h_{\omega_0}} {1 \over {(|S|^{2}_{h} +\epsilon)^{1-\beta}}} \omega_0^{n}.
\]
If we can solve Equation (\ref{SKE:4}) up to $t =\beta, $  then we have:
\[
Ric(\omega_{\phi_{\epsilon}(\beta)} )= \beta \omega_{\phi_{\epsilon}(\beta)} + (1-\beta) \chi_{\epsilon} \geq \beta \omega_{\phi_{\epsilon} (\beta)}.
\]
\begin{lem}  \label{lem3.7}
There is a constant $C_6>0$ such that
\[
|E(\epsilon, (1-\beta) D) (\psi_\epsilon)| \leq C_6.
\]
\end{lem}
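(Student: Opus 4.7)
The plan is to evaluate $E_{\epsilon,(1-\beta)D}(\psi_\epsilon)=E(\psi_\epsilon)+(1-\beta)J_{\chi_\epsilon}(\psi_\epsilon)$ by expanding both functionals via the explicit formulas already recorded---the decomposition $E(\varphi)=\int_X\log(\omega_\varphi^n/\omega_0^n)\,\omega_0^n+I(\varphi)+Q(\varphi)$ stated just before Proposition \ref{prop3.6} and the formula in Proposition \ref{prop3.4}---and bounding each resulting piece using three inputs: (i) the Monge--Amp\`ere equation \eqref{SKE:3} satisfied by $\psi_\epsilon$, (ii) the uniform $C^0$ bounds on $\psi_\epsilon$ and $\varphi_\epsilon$ provided by Kolodziej's theorem (as in Theorem \ref{thm2.2}), and (iii) the uniform integrability of $|\log(|S|_h^2+\epsilon)|\,(|S|_h^2+\epsilon)^{-(1-\beta)}$ against $\omega_0^n$, which holds because $\beta>0$ and $D$ is smooth.

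For the entropy piece, taking the logarithm of \eqref{SKE:3} gives
\[
\log\frac{\omega_{\psi_\epsilon}^n}{\omega_0^n}=-\beta\varphi_\epsilon+h_{\omega_0}-(1-\beta)\log(|S|_h^2+\epsilon).
\]
Integrating against $\omega_0^n$ produces three terms: the first is bounded by $\beta\|\varphi_\epsilon\|_{C^0}\mathrm{Vol}(X)$; the second is a fixed constant; the third is controlled uniformly in $\epsilon\in(0,1]$ using $|\log(|S|_h^2+\epsilon)|\leq C+|\log|S|_h^2|$ together with $\log|S|_h^2\in L^1(\omega_0^n)$, which follows from the local model $|S|_h^2\sim|z_1|^2$ near $D$. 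For $I(\psi_\epsilon)$, $Q(\psi_\epsilon)$, and the path integral in Proposition \ref{prop3.4}, I would choose the linear path $\varphi(t)=t\psi_\epsilon$, so that $\dot\varphi(t)=\psi_\epsilon$ and $\omega_0-\omega_t=-t\sqrt{-1}\partial\bar\partial\psi_\epsilon$. Then $|I(\psi_\epsilon)|\leq n\|\psi_\epsilon\|_{C^0}\mathrm{Vol}(X)$ is immediate; for $Q$ I would expand $Ric(\omega_0)=\omega_0-\sqrt{-1}\partial\bar\partial h_{\omega_0}$ and integrate by parts to move $\partial\bar\partial$ from $h_{\omega_0}$ onto $\psi_\epsilon$, reducing to $\int_X h_{\omega_0}(\omega_{\psi_\epsilon}-\omega_0)\wedge\omega_t^{n-1}$, bounded by $\|h_{\omega_0}\|_{C^0}$ times a cohomological constant; the path integral in Proposition \ref{prop3.4} simplifies via $-t\sqrt{-1}\partial\bar\partial\psi_\epsilon=\omega_0-\omega_t$ to $-\int_X\psi_\epsilon(\omega_t-\omega_0)\wedge\omega_t^{n-1}$ pointwise in $t$, again controlled by $\|\psi_\epsilon\|_{C^0}\mathrm{Vol}(X)$.

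The remaining contribution is the log-singular integral $\int_X\log(|S|_h^2+\epsilon)\,\omega_{\psi_\epsilon}^n$ coming from the first term in Proposition \ref{prop3.4}. Substituting for $\omega_{\psi_\epsilon}^n$ via \eqref{SKE:3} and absorbing the uniformly bounded factor $e^{-\beta\varphi_\epsilon+h_{\omega_0}}$, it is dominated by a fixed multiple of $\int_X|\log(|S|_h^2+\epsilon)|(|S|_h^2+\epsilon)^{-(1-\beta)}\omega_0^n$. This is the main technical obstacle: in the transverse direction to $D$ the integrand behaves like $r^{2\beta-2}|\log r|$, which after the $r\,dr$ factor from $\omega_0^n$ becomes $r^{2\beta-1}|\log r|$, uniformly integrable near $r=0$ precisely because $\beta>0$; away from $D$ the integrand is bounded. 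Summing all pieces then yields the desired bound $|E_{\epsilon,(1-\beta)D}(\psi_\epsilon)|\leq C_6$.
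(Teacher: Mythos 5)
Your proof is correct and is essentially the ``direct verification using the explicit expressions of the energy functionals'' that the paper's one-line proof alludes to: the only genuinely singular contribution is $\int_X \log(|S|_h^2+\epsilon)\,\omega_{\psi_\epsilon}^n$, and you control it exactly as intended, by substituting the Monge--Amp\`ere equation (\ref{SKE:3}) (equivalently the upper bound of Theorem \ref{thm2.2}) and using that $r^{2\beta-1}|\log r|$ is integrable near $r=0$, while all remaining terms are handled by the uniform $C^0$ bounds on $\varphi_\epsilon,\psi_\epsilon$ and cohomological volume identities along the linear path. No gaps.
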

\begin{proof} By Theorem \ref{thm2.2} this is  a direct verification using the explicit expressions of energy functionals described above. 
\end{proof}
\begin{lem} Along the continuity path, $E_{\epsilon, (1-\beta) D}(\phi_\epsilon(t))$ decreases monotonically. \label{lem3.9}
\end{lem}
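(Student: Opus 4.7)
The plan is to differentiate $E_{\epsilon,(1-\beta)D}$ along the continuity path $\phi_\epsilon(t)$, substitute the continuity equation~\eqref{eqn3.4} to simplify, use the linearization of the Monge--Amp\`ere equation~\eqref{SKE:4} to convert the resulting integral into a quadratic form in $\dot\phi_\epsilon$, and then apply the Lichnerowicz eigenvalue estimate made available by the positive Ricci lower bound that~\eqref{eqn3.4} enforces along the path.

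First, from the variational formulas in Definitions~\ref{def3.2} and~\ref{def3.3} combined with~\eqref{twisted energy 1}, the derivative along the path is
\[
\frac{d}{dt}E_{\epsilon,(1-\beta)D}(\phi_\epsilon(t)) = -n\int_X \dot\phi_\epsilon\,\bigl(Ric(\omega_{\phi_\epsilon(t)}) - \beta\omega_{\phi_\epsilon(t)} - (1-\beta)\chi_\epsilon\bigr)\wedge\omega_{\phi_\epsilon(t)}^{n-1}.
\]
Using~\eqref{eqn3.4}, the bracket collapses to $(\beta-t)(\omega_{\varphi_\epsilon} - \omega_{\phi_\epsilon(t)}) = (\beta-t)\sqrt{-1}\partial\bar\partial(\varphi_\epsilon - \phi_\epsilon(t))$.

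Next, differentiating the logarithm of~\eqref{SKE:4} in $t$ gives the linearized identity
\[
\Delta_{\omega_{\phi_\epsilon(t)}}\dot\phi_\epsilon + t\dot\phi_\epsilon = \varphi_\epsilon - \phi_\epsilon(t).
\]
For each fixed $\epsilon>0$, $\omega_{\phi_\epsilon(t)}$ is a smooth K\"ahler metric on the closed manifold $X$, so integration by parts is unconstrained; substituting the linearization and using $\int u\,\Delta u\,\omega^n = -\int|\nabla u|^2_\omega\,\omega^n$ then produces
\[
\frac{d}{dt}E_{\epsilon,(1-\beta)D}(\phi_\epsilon(t)) = -(\beta-t)\left(\int_X (\Delta_{\omega_{\phi_\epsilon(t)}}\dot\phi_\epsilon)^2\,\omega_{\phi_\epsilon(t)}^n - t\int_X |\nabla\dot\phi_\epsilon|^2_{\omega_{\phi_\epsilon(t)}}\,\omega_{\phi_\epsilon(t)}^n\right).
\]

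Finally, since $\omega_{\varphi_\epsilon}>0$ and $\chi_\epsilon>0$, the continuity equation~\eqref{eqn3.4} forces $Ric(\omega_{\phi_\epsilon(t)})\geq t\,\omega_{\phi_\epsilon(t)}$. The K\"ahler Lichnerowicz estimate on the smooth compact K\"ahler manifold $(X,\omega_{\phi_\epsilon(t)})$ then gives $\lambda_1(-\Delta_{\omega_{\phi_\epsilon(t)}})\geq t$, and a spectral expansion implies $\int(\Delta f)^2\geq t\int|\nabla f|^2$ for every smooth $f$ (the constant part of $f$ drops out on both sides). Applied to $f=\dot\phi_\epsilon$ and combined with $\beta-t\geq 0$ on $[0,\beta]$, this delivers $\frac{d}{dt}E_{\epsilon,(1-\beta)D}(\phi_\epsilon(t))\leq 0$. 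The main obstacle is the book-keeping in the first two steps; once the Ricci lower bound is extracted from~\eqref{eqn3.4}, the closing inequality is classical and, crucially, requires no cone-edge analysis because the path is run at fixed $\epsilon>0$ where everything is globally smooth.
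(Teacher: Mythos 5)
Your proof is correct and follows essentially the same route as the paper: differentiate the twisted K-energy along the path, substitute the continuity equation (\ref{eqn3.4}) and the linearized Monge--Amp\`ere equation, and close with the Lichnerowicz bound $\lambda_1(-\triangle_{\phi_\epsilon})\geq t$ coming from $Ric(\omega_{\phi_\epsilon(t)})\geq t\,\omega_{\phi_\epsilon(t)}$. The only (minor, and in fact slightly cleaner) difference is in the final algebra: you stop at the quadratic form $\int_X(\triangle\dot\phi_\epsilon)^2 - t\int_X|\nabla\dot\phi_\epsilon|^2$, from which the constant mode of $\dot\phi_\epsilon$ drops out automatically, whereas the paper substitutes once more to reach $\int_X(\triangle_{\phi_\epsilon}\dot\phi_\epsilon + t\dot\phi_\epsilon)\dot\phi_\epsilon\,\omega_{\phi_\epsilon}^n\leq 0$, where the negativity of $\triangle_{\phi_\epsilon}+t$ must be read on the orthogonal complement of the constants.
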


\begin{proof} 
Here we follow   \cite{G}.  We take time derivative on both sides of Equation (\ref{SKE:4}). Then,  
\[
\triangle_{\phi_\epsilon} \dot{\phi_\epsilon}  = - t \dot{\phi_\epsilon} -(\phi_\epsilon -\varphi_{\epsilon}).
\]
In this calculation we have omitted the parameter $t$ for simplicity.
A straightforward calculation then follows,
\[
\begin{array} {lcl} 
&&{d\over {d\, t}} (E + (1-\beta) J_{\chi_{\epsilon}})(\phi_\epsilon(t))\\ 
&=& n \int_X \dot{\phi_\epsilon}(-Ric(\omega_{\phi_\epsilon})+\omega_{\phi_\epsilon}+(1-\beta)\chi_\epsilon-(1-\beta)\omega_{\phi_\epsilon})\wedge \omega_{\phi_\epsilon}^{n-1}\\
&=& n\int_X \dot{\phi_\epsilon}(-t\omega_{\phi_\epsilon}-(\beta-t)\omega_{\varphi_\epsilon}-(1-\beta)\chi_\epsilon+\beta \omega_{\phi_\epsilon}+(1-\beta)\chi_\epsilon)\wedge \omega_{
\phi_\epsilon}^{n-1}\\
& = & n(\beta -t ) \int_{X}\; \dot{\phi_\epsilon} (\omega_{\phi_\epsilon} - \omega_{\varphi_{\epsilon}}) \wedge \omega_{\phi_\epsilon}^{n-1} \\ 
& = &  n(\beta -t ) \int_{X}\;  (\phi_\epsilon -\varphi_{\epsilon}) \cdot \triangle_{\phi_\epsilon} \dot{\phi_\epsilon} \; \omega_{\phi_\epsilon}^{n}\\
& = &  -n (\beta -t ) \int_{X}\;(\phi_\epsilon -\varphi_{\epsilon}) \cdot ( t \dot{\phi_\epsilon} +(\phi_\epsilon -\varphi_{\epsilon}))\omega_{\phi_\epsilon}^n \\
& =  & - n(\beta -t) \int_{X} (\phi_\epsilon -\varphi_{\epsilon})^{2} \omega_{\phi_\epsilon}^{n} - nt (\beta -t) \int_{X} (\phi_\epsilon -\varphi_{\epsilon}) \cdot \dot{\phi_\epsilon}  \;\omega_{\phi_\epsilon}^{n} \\
& \leq  & n t (\beta -t) \int_{X}  (\triangle_{\phi_\epsilon} \dot{\phi_\epsilon} + t \dot{\phi_\epsilon}) \cdot \dot{\phi_\epsilon} \;\omega_{\phi_\epsilon}^{n}
\\& \leq & 0. \end{array}
\]
The last inequality holds because $Ric(\omega_{\phi_\epsilon(t, \cdot)}) > t \omega_{\phi_\epsilon(t, \cdot)} $ and  $\triangle_{\phi_\epsilon}  + t $ is a negative operator.
\end{proof}

Now we are ready to prove Theorem \ref{thm3.1}.
\begin{proof}  According to Proposition \ref{prop3.6}, the  twisted K-energy $E_{(1-\beta) D}(\varphi) $ is proper on $\cal H$. Following Lemma \ref{lem3.5}, 
\[
E_{\epsilon, (1-\beta) D}(\varphi) \geq E_{(1-\beta) D}(\varphi) - C_3
\]
is also proper on $\cal H.\;$ By monotonicity, we have
\[
E_{\epsilon, (1-\beta) D}(\phi_{\epsilon} (t)) \leq E_{\epsilon, (1-\beta) D}(\phi_{\epsilon}(0)) = E_{\epsilon, (1-\beta) D}(\psi_{\epsilon}) , \qquad \forall t\in [0,\beta].
\]
So by Lemma \ref{lem3.7}, 
\[
E_{\epsilon, (1-\beta) D}(\phi_{\epsilon} (t)) \leq C_6, \qquad \forall \;t \in [0,\beta].
\]
By definition of properness there is a constant $C_7$ with
\[
J_{0}(\phi_\epsilon(t)) \leq C_7.
\]
It follows from the standard argument that we can solve Equation (\ref{SKE:4}) up to $t=\beta$, and there is a constant $C_8$ such that 
\[
    \displaystyle \sup_{\epsilon \in (0,1]} \displaystyle \max_{t\in [0,\beta]}\;||\phi_{\epsilon}(t)||_{L^\infty} \leq C_8.
\]
As in the proof of Theorem \ref{thm2.2},  there is a constant $C_9$ such that 
\[
 C_9\omega_0< \omega_{\phi_{\epsilon}(t)} \leq {C_9\over (\epsilon + |S|^{2}_{h})^{1-\beta}} \cdot \omega_0, \forall\; t\in [0,\beta]\; \epsilon \in (0,1).
\]
As before following \cite{Kolo98} and Evans-Krylov theory to bootstrap regularity away from divisor, one can prove that  $\phi_{\epsilon}(t, \cdot)$ converges 
to $\phi_{0}(t,\cdot)\;$ globally in $C^{\gamma'}(X)$ and  locally in $C^{3, \gamma'}$ away from $D$. Moreover, in $X\setminus D$, it satisfies the equation
\[
\omega_{\phi_{0}(t,\cdot) }^{n} = e^{-t \phi_{0}(t,\cdot)- (\beta - t)\varphi_{\beta} + h_{\omega_0}} {1 \over |S|^{2-2\beta}_{h} }\omega_0^{n},\qquad \forall t\in [0,\beta].
\]
with
\[
\phi_{0}(0,\cdot) = \varphi_{\beta}.\;
\]
This can be written in a more concise form  as
\begin{equation}
\omega_{\phi_{0}}^{n} = e^{-t(\phi_{0} -\varphi_{\beta})}  \omega_{\varphi_{\beta}}^{n},\qquad t \in [0,\beta].
\label{eq:SKE4}
\end{equation}

Since $\phi_{\epsilon}(t, \cdot)$ is uniformly bounded, we see  $\phi_{0}(\beta, \cdot)$ is in the weak sense (c.f. \cite{B}) a K\"ahler-Einstein metric on $X$ with cone angle $2\pi\beta$ along $D$. By \cite{SW12}, there is no non-trivial holomorphic vector field on $X$ which is tangential to $ D.\;$ So we can use the  uniqueness theorem of Berndtsson \cite{Berndt11} to obtain
\[
   \phi_{0}(\beta,\cdot) = \varphi_\beta(\cdot), \forall t \in [0,\beta].
\]
Then Proposition  \ref{prop2.5} implies that  $(X, \omega_{\phi_{\epsilon}(\beta)})$ converges  in the Gromov-Hausdorff topology to
$(X, \omega_{\varphi_\beta})\; $ as $\epsilon\rightarrow0$. \\
\end{proof}

Here we give an alternative proof which makes use of the openness theorem proved by the second
named author, bypassing Berndtsson's theorem. 

\begin{proof}
  Note that the expression $e^{-t(\phi_0-\varphi_\beta)}$ in Equation (\ref{eq:SKE4}) is H\"older continuous on $X$, so it lies  in the H\"older space $\cal C^{,\gamma, \beta} $ for some $\gamma < {1\over \beta} -1$. 
  Following  \cite{Dona11}, the Laplacian operator $\triangle_{\varphi_{\beta}}$
defines a continuous and invertible  map 
$$\triangle_{\varphi_{\beta}}:\;\; {\cal C}_0^{2,\gamma, \beta}(X, D) \rightarrow {\cal C}^{,\gamma, \beta}(X, D).\;$$
Here ${\cal C}_0^{2,\gamma, \beta}(X, D)$ consists of functions in  ${\cal C}^{2,\gamma, \beta}(X, D)$ with zero average.
It follows that, 
there exists a continuous family $\psi(t,\cdot) \in {\cal C}_0^{2,\gamma, \beta}(X, D)$ for small $t$, say $ t \in [0,\epsilon_0]$, which solves
\[
\omega_{\psi}^{n} = e^{-t(\phi_{0} -\varphi_{\beta})}  \omega_{\varphi_{\beta}}^{n}
\]
and $\psi(0,\cdot) =\varphi_{\beta}.\;$
Then, either following the uniqueness in \cite{Kolo03}  or Proposition \ref{prop2.3}, we
have
\[
   \psi(t,\cdot) = \phi_{0}(t,\cdot)+\text{constant}
\]
for $t \in [0, \epsilon_0]$. It follows that $\phi_{0}(t,\cdot) \in {\cal C}^{2,\gamma, \beta} (X, D)$ 
for $t \in [0,\epsilon_0].\;$ \\

Now we define the constant family $\varphi_{\beta} (t, \cdot) = \varphi_{\beta, \cdot}, \;$ then it satisfies the same Equation (\ref{eq:SKE4}) for $t \in [0, \epsilon_{0}].\;$
\begin{equation}
\omega_{\varphi_{\beta}(t)}^{n} = e^{-t(\varphi_{\beta}(t) -\varphi_{\beta})}  \omega_{\varphi_{\beta}}^{n},\qquad t \in [0,\beta].
\label{eq:SKE5}
\end{equation}
By \cite{SW12}, there is no non-trivial holomorphic vector fields on $X$ which is tangential to $D. \;$ It follows that,
the first eigenvalue of $\omega_{\varphi_{\beta} } = \omega_{\phi_{0}(0,\cdot)}$ is strictly bigger than $\beta > 0.\;$ Consequently
for $t$ sufficiently small, $ \omega_{\phi_{0}(t,\cdot)}$ has eigenvalue strictly bigger than ${\beta \over 2}.\;$
Now compare the two families of  solutions to Equation (\ref{eq:SKE4}), by implicit function theorem again we see the uniqueness holds.
In other words, by making $\epsilon_0$ even smaller we have
\[
\phi_{0}(t,\cdot) = \varphi_{\beta}, \forall\ t \in [0, \epsilon_{0}].
\]
Repeating the same procedure as we increase  $t \leq \beta,$  we see the same holds for all $t\in [0,\beta].\;$
Our theorem is then proved.
\end{proof}

\begin{rem} \label{remark neg}
Finally we remark that in the case when $\lambda>1$  and $1-(1-\beta)\lambda\leq 0$ there is a complete existence theory \cite{JMR}, but the argument in this article also applies to prove the following 
\end{rem}
\begin{theo} 
Let $\lambda>1$ and $\beta_0\in (0, 1-\lambda^{-1}]$. If $\omega$ is a K\"ahler-Einstein metric with cone angle $2\pi\beta$  along $D\in |-\lambda K_X|$ with $\beta\in[\beta_0, 1-\lambda^{-1}]$,  then $(X,\omega)$ is the Gromov-Hausdorff limit of a sequence of smooth K\"ahler metrics $\omega_{i}$ with  
$ Ric(\omega_{i})\geq c_\beta \omega_{i}$ where
$c_\beta= (1-\lambda(1-\beta))\leq 0$, and diameter bounded above by a uniform constant depending only on $X, D$ and $\beta_0$. 
\end{theo}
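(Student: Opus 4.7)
The plan is to parallel the argument of Section~2 directly, without invoking the continuity method of Section~3 or K-energy properness. The sign condition $c_\beta := 1-\lambda(1-\beta)\le 0$ is precisely what makes the auxiliary Monge--Amp\`ere equation solvable in one step by Aubin--Yau. Fix $h$, a smooth Hermitian metric on $-\lambda K_X$ with Chern curvature $\lambda\omega_0$; the cone K\"ahler--Einstein metric then satisfies, away from $D$,
$$\omega_{\varphi_\beta}^n = e^{-c_\beta\varphi_\beta + h_{\omega_0}}\,\frac{\omega_0^n}{|S|_h^{2(1-\beta)}}.$$

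First I would regularize: for each $\epsilon\in(0,1]$ solve the smooth equation
$$\omega_{\psi_\epsilon}^n = e^{-c_\beta\psi_\epsilon + h_{\omega_0}}\,\frac{\omega_0^n}{(|S|_h^2+\epsilon)^{1-\beta}}$$
by Aubin--Yau when $c_\beta<0$ and by Calabi--Yau (after a normalizing constant) when $c_\beta=0$. A direct computation modelled on Proposition~\ref{prop2.1} gives
$$Ric(\omega_{\psi_\epsilon}) = c_\beta\omega_{\psi_\epsilon} + (1-\beta)\bigl(\lambda\omega_0 + \sqrt{-1}\partial\bar\partial\log(|S|_h^2+\epsilon)\bigr),$$
and the pointwise inequality at the start of Proposition~\ref{prop2.1} applied to $f=|S|_h^2$, together with $\sqrt{-1}\partial\bar\partial\log|S|_h^2 = -\lambda\omega_0$ on $X\setminus D$, shows that the parenthesised form is $\ge \lambda\epsilon(|S|_h^2+\epsilon)^{-1}\omega_0\ge 0$. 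Hence $Ric(\omega_{\psi_\epsilon})\ge c_\beta\omega_{\psi_\epsilon}$ on all of $X$, which is exactly the Ricci bound required.

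Next I would establish the $\epsilon$-uniform estimates by following Section~2 almost verbatim. Kolodziej with any exponent $p_0<(1-\beta_0)^{-1}$ yields $\|\psi_\epsilon\|_{C^0}\le C$; the Chern--Lu computation of Theorem~\ref{thm2.2}, combined with the $C^0$ bound and the Ricci lower bound $c_\beta\omega_{\psi_\epsilon}$, then yields
$$C_1^{-1}\omega_0 \le \omega_{\psi_\epsilon} \le C_1(|S|_h^2+\epsilon)^{-(1-\beta)}\omega_0.$$
Uniqueness for the singular Monge--Amp\`ere equation, as in Proposition~\ref{prop2.3}, identifies the limit: $\psi_\epsilon\to\varphi_\beta$ globally in $C^{\gamma'}$ and locally smoothly on $X\setminus D$. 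The diameter bound of Proposition~\ref{prop2.4} and the Gromov--Hausdorff convergence of Proposition~\ref{prop2.5} carry over without change, producing a bound depending only on $X,D,\omega_0,\beta_0,\lambda$.

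The main technical point to watch is the Chern--Lu step when $c_\beta\le 0$: the standard trick producing an inequality of the form $\Delta_{\psi_\epsilon}(\log e(\psi_\epsilon)-A\psi_\epsilon)\ge (A-B)e(\psi_\epsilon) + c_\beta - An$ still closes off at a maximum point, provided $A$ is chosen large depending on $c_{\beta_0}$ and on the bisectional curvature of $\omega_0$. This is precisely why the resulting constants, and hence the diameter bound, depend on $\beta_0$ rather than being universal, consistent with the statement. A secondary check is that the uniform constants through the whole chain of estimates really depend only on $\beta_0$ and not on the particular $\beta\in[\beta_0,1-\lambda^{-1}]$; this reduces to the uniform $L^{p_0}$ bound on the right-hand side as $\beta$ varies over this compact interval, which is immediate.
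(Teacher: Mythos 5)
Your proposal follows essentially the same route as the paper: regularize the right-hand side by replacing $|S|_h^2$ with $|S|_h^2+\epsilon$, solve the resulting Aubin--Yau equation, verify $Ric(\omega_{\psi_\epsilon})\geq c_\beta\omega_{\psi_\epsilon}$ by the same pointwise inequality as in Proposition \ref{prop2.1}, and then rerun the Section \ref{sec 2} estimates, noting (as the paper does) that the diameter bound of Proposition \ref{prop2.4} depends only on the Ricci lower bound and the $L^\infty$/volume-form bounds rather than on Myers' theorem. The only point you gloss over is that, since the right-hand side now contains the unknown $e^{-c_\beta\psi_\epsilon}$, the uniform $L^{p_0}$ bound feeding into Kolodziej requires first bounding $\sup_X\psi_\epsilon$ by the maximum principle (uniformly in $\beta\in[\beta_0,1-\lambda^{-1}]$), which is exactly the one extra step the paper inserts.
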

\begin{proof} 
The main issue is that in our previous argument the diameter bound depends on the particular $\beta$, and in the case of positive Ricci curvature (as assumed before that $\beta_{0}> 1-\lambda^{-1}$) we can apply Myers' theorem to show that the bound only depends on $\beta_0$. Under our assumptions, we are in the  case of nonpositive Ricci curvature. In general the diameter can not have a uniform upper bound, if one varies the complex structure on $(X, D)$, even when $X$ has complex dimension one.  However for a fixed $(X, D)$, a closer look at the argument in the proof of Theorem \ref{thm2.2} and Proposition \ref{prop2.4} shows that the diameter bound really depends only on the lower bound on the Ricci curvature of $\omega_{\varphi_\beta}$ and the $L^\infty$ bound on $\varphi_\beta$ (which in turn depends on the  $L^{p_0}$ bound on the volume form of $\omega_{\varphi_\beta}$). Notice we assume $c_\beta\leq 0$, then the metric $\omega_{\varphi_\beta}$ satisfies the equation
\begin{eqnarray}
\omega_{\varphi_{\beta}}^{n} & = &  e^{-c_\beta\varphi_{\beta}+ h_{\omega_0}} {1 \over {|S|_h^{2(1-\beta)}}} \omega_0^{n}, \label{SKE:6}
\end{eqnarray}
Similar to the arguments in Section \ref{sec 2}, by Yau's theorem \cite{Yau78} for $\epsilon \in (0, 1]$ one can solve the equation for $\psi_\epsilon$:
\[
\omega_{\psi_\epsilon}^{n}  =   e^{-c_\beta\psi_\epsilon+ h_{\omega_0}} {1 \over {(|S|_h^2+\epsilon)^{(1-\beta)}}} \omega_0^{n}. \label{SKE:6}
\]
Direct calculation as before shows that $Ric(\omega_{\psi_\epsilon})\geq c_\beta \omega_{\psi_\epsilon}$. Moreover by the maximum principle 
 we see that there are constants $p_0\in (1, \frac{1}{1-\beta_0})$, and $A>0$ depending only on $X, D, \omega_0$ and $\beta_0$ such that for any $\epsilon\in (0, 1]$, 
 $$\sup_X \psi_\epsilon+||\frac{\omega_{\psi_\epsilon}^n}{\omega_0^n}||_{L^{p_0}}\leq A. $$
Following the arguments in Section \ref{sec 2}, one can show that as $\epsilon\rightarrow0$ the Gromov-Hausdorff limit of $(X, \omega_{\psi_\epsilon})$ is exactly $(X, \omega_{\varphi_\beta})$. Moreover, there is a uniform diameter bound independent of $\beta \in [\beta_0, 1-\lambda^{-1}]$. 
 \end{proof}

\end{document}